\declaretheorem[name=Theorem, numberwithin=section]{theorem}
\declaretheorem[name=Lemma, sibling=theorem]{lemma}
\declaretheorem[name=Corollary, sibling=theorem]{corollary}
\declaretheorem[name=Conjecture, sibling=theorem]{conjecture}
\def\cqedsymbol{\ifmmode$\lrcorner$\else{\unskip\nobreak\hfil
\penalty50\hskip1em\null\nobreak\hfil$\lrcorner$
\parfillskip=0pt\finalhyphendemerits=0\endgraf}\fi}
\newtheoremstyle{named}{}{}{\itshape}{}{\bfseries}{.}{.5em}{\thmnote{#3's }#1}
\theoremstyle{named}
\def\dist{\text{dist}}
\def\degen{\text{degen}}
\def\diam{\text{diam}}
\newcounter{regle}
\let\le\leqslant
\let\ge\geqslant
\let\leq\leqslant
\let\geq\geqslant
\def\centerarc[#1](#2)(#3:#4:#5)%
\def\nodearc(#1)(#2)(#3:#4)[#5](#6)%
\def\nodeellipse(#1)(#2)(#3:#4:#5)[#6](#7)%
\def\nodc(#1)(#2,#3)[#4, #5]%
\tikzset{
  hatch size/.store in=\hatchsize,
  hatch angle/.store in=\hatchangle,
  hatch line width/.store in=\hatchlinewidth,
  hatch size=5pt,
  hatch angle=0pt,
  hatch line width=.5pt,
}
\tikzset{
    dashone/.style={dash pattern=on 5pt off 15pt},
  }
\newcommand{\convexpath}[2]{
[   
    create hullnodes/.code={
        \global\edef\namelist{#1}
        \foreach [count=\counter] \nodename in \namelist {
            \global\edef\numberofnodes{\counter}
            \node at (\nodename) [draw=none,name=hullnode\counter] {};
        }
        \node at (hullnode\numberofnodes) [name=hullnode0,draw=none] {};
        \pgfmathtruncatemacro\lastnumber{\numberofnodes+1}
        \node at (hullnode1) [name=hullnode\lastnumber,draw=none] {};
    },
    create hullnodes
]
($(hullnode1)!#2!-90:(hullnode0)$)
\foreach [
    evaluate=\currentnode as \previousnode using \currentnode-1,
    evaluate=\currentnode as \nextnode using \currentnode+1
    ] \currentnode in {1,...,\numberofnodes} {
  let
    \p1 = ($(hullnode\currentnode)!#2!-90:(hullnode\previousnode)$),
    \p2 = ($(hullnode\currentnode)!#2!90:(hullnode\nextnode)$),
    \p3 = ($(\p1) - (hullnode\currentnode)$),
    \n1 = {atan2(\y3,\x3)},
    \p4 = ($(\p2) - (hullnode\currentnode)$),
    \n2 = {atan2(\y4,\x4)},
    \n{delta} = {-Mod(\n1-\n2,360)}
  in 
    {-- (\p1) arc[start angle=\n1, delta angle=\n{delta}, radius=#2] -- (\p2)}
}
-- cycle
}
\newcommand{\hobbyconvexpath}[2]{
[   
    create hobbyhullnodes/.code={
        \global\edef\namelist{#1}
        \foreach [count=\counter] \nodename in \namelist {
            \global\edef\numberofnodes{\counter}
            \node at (\nodename)
[draw=none,name=hobbyhullnode\counter] {};
        }
        \node at (hobbyhullnode\numberofnodes)
[name=hobbyhullnode0,draw=none] {};
        \pgfmathtruncatemacro\lastnumber{\numberofnodes+1}
        \node at (hobbyhullnode1)
[name=hobbyhullnode\lastnumber,draw=none] {};
    },
    create hobbyhullnodes
]
($(hobbyhullnode1)!#2!-90:(hobbyhullnode0)$)
\pgfextra{
  \gdef\hullpath{}
\foreach [
    evaluate=\currentnode as \previousnode using int(\currentnode-1),
    evaluate=\currentnode as \nextnode using int(\currentnode+1)
    ] \currentnode in {1,...,\numberofnodes} {
    \ifnum\currentnode=1\relax
    \xdef\hullpath{([closed=true]$(hobbyhullnode\currentnode)!#2!180:(hobbyhullnode\previousnode)$)
  ..($(hobbyhullnode\nextnode)!0.5!(hobbyhullnode\currentnode)$)}
    \else
    \xdef\hullpath{\hullpath
  ..($(hobbyhullnode\currentnode)!#2!180:(hobbyhullnode\previousnode)$)
  ..($(hobbyhullnode\nextnode)!0.5!(hobbyhullnode\currentnode)$)}
    \fi
    \ifx\currentnode\numberofnodes
    \else
    \xdef\hullpath{\hullpath
  ..($(hobbyhullnode\nextnode)!#2!-90:(hobbyhullnode\currentnode)$)}
    \fi
}
}
\hullpath
}
\newcommand{\colora}{Gray}
\newcommand{\colorb}{SkyBlue}
\newcommand{\colorc}{RawSienna}
\newcommand{\colord}{YellowOrange}
\newcommand{\colore}{WildStrawberry}
\newcommand{\colorf}{ForestGreen}
\newcommand{\colorg}{Violet}
\title{Optimal List Recoloring of Subcubic Graphs and Complete Multipartite Graphs }
\author[1]{Lucas De Meyer}
\affil[1]{Université Claude Bernard Lyon 1, CNRS, Ecole Centrale de Lyon, INSA Lyon, LIRIS, UMR5205, 69622 Villeurbanne, France}
\begin{document}

\maketitle

\begin{abstract}
    For a list-assignment $L$, the \emph{reconfiguration graph} $C_L(G)$ of a graph $G$ is the graph whose vertices are proper $L$-colorings of $G$ and whose edges link two colorings that differ on only one vertex. 
    If $|L(v)| \ge d(v) + 2$ for every vertex of $G$, it is known that $C_L(G)$ is connected.
    In this case, Cambie et al.~\cite{cambie2024optimally} investigated the diameter of $C_L(G)$. They conjectured that $diam(C_L(G)) \le n(G) + \mu(G)$ with $\mu(G)$ the size of a maximum matching of $G$ and proved several results towards this conjecture. 
    We answer to two of their open problems by proving the conjecture for two classes of graphs, namely subcubic graphs and complete multipartite graphs.
\end{abstract}

\section{Introduction}

Reconfiguration problems focus on finding transformations between two feasible solutions of a problem through a sequence of elementary steps, ensuring that each intermediate state is also a feasible solution. Recently, these problems have gained significant attention, with a wide range of problems being explored under the reconfiguration framework such as boolean satisfiability~\cite{gopalan2009connectivity}, independent sets~\cite{ bonamy2014reconfiguring, ito2020parameterized}, dominating sets~\cite{suzuki2016reconfiguration}, shortest paths~\cite{bonsma2012complexity, kaminski2011shortest} or graph coloring~\cite{bonsma2009finding, bonamy2014reconfiguration, bousquet2016fast}. 
The main questions in reconfiguration problems are (1) on the existence of such a transformation and (2) on the length of a shortest transformation, if one exists.

In this paper, we investigate the second question for the graph coloring reconfiguration problem (recoloring for short). Let $G$ be a graph and $k$ a non-negative integer. A \emph{$k$-coloring} of $G$ is a function that assigns a color from the set $\{1, \dots, k\}$ to each vertex of $G$. 
All colorings considered here are \emph{proper}, meaning no two adjacent vertices share the same color.
Recoloration questions can be rephrased as properties of the so-called \emph{$k$-recoloring graph} of $G$, denoted by $C_k(G)$, which is the graph whose vertices correspond to the $k$-colorings of $G$, and two colorings are adjacent if they differ on exactly one vertex. A key focus of this paper is to study the diameter of this recoloring graph. However, $C_k(G)$ may be disconnected or have infinite diameter unless $k$ is sufficiently large. A well-known sharp bound on $k$ such that $C_k(G)$ is connected is $\degen(G)+2$, with $\degen(G)$ the smallest $p$ such that $G$ is $p$-degenerate.

Cereceda~\cite{cereceda2007mixing} conjectured that if $k \ge \degen(G) + 2 $, the diameter of $C_k(G)$ is $O(n^2)$, where $n$ is the number of vertices in $G$. 
Recently, Bousquet and Heinrich~\cite{bousquet2022polynomial} provided the best known upper bound, showing that $\diam(C_k(G)) = O(n^{\degen(G) + 1})$. 
For larger $k$, specifically $k \ge \Delta(G) + 2$ where $\Delta(G)$ is the maximum degree of $G$, Cereceda~\cite{cereceda2007mixing} proved that the diameter is $O(n \cdot \Delta) = O(n^2)$. 
More recently, Cambie et al.~\cite{cambie2024optimally} improved this upper bound to $2n$ for $ k \ge \Delta(G) + 2$ and conjectured that it can be further reduced to $\lfloor 3n/2 \rfloor $.

In \cite{cambie2024optimally}, Cambie et al. also extended their investigation to more general contexts, namely list colorings and correspondence colorings. Given a \emph{list-assignment} $L$ for $G$, where each vertex $v$ is assigned a list of colors $L(v)$, a \emph{$L$-coloring} is a coloring where each vertex $v$ is colored from its respective list $L(v)$. The \emph{$L$-recoloring graph} of a graph $G$, denoted by $C_L(G)$ is constructed similarly as $C_k(G)$,  with vertices representing $L$-colorings of $G$ and edges connecting colorings that differ at exactly one vertex. 
In the context of list coloring, they investigated a precise bound for the diameter depending on the \emph{matching number} $\mu(G)$, which is the maximum size of a matching in $G$.
In particular, they conjecture the following:

\begin{conjecture}[Cambie et al.~\cite{cambie2024optimally}]\label{conj}
Let $G$ be a graph. If $L$ is a list-assignment such that $|L(v)| \ge d(v) + 2$ for every vertex $v$, then $\diam(C_L(G)) \le n(G) + \mu(G)$, with $\mu(G)$ the matching number of $G$.
\end{conjecture}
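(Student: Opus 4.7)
The plan is to attack the conjecture on the two announced classes separately, since a uniform argument seems out of reach. The easy inequality $\diam(C_L(G)) \ge n$ is immediate by taking colorings differing on every vertex, so the work is to construct, for every pair of $L$-colorings $\alpha, \beta$, a recoloring sequence of length at most $n + \mu(G)$. My guiding heuristic is that mandatory moves occur at each vertex of $D = \{v : \alpha(v) \ne \beta(v)\}$ and that \emph{double moves} — vertices recolored twice in order to unblock a neighbor — can be confined to the endpoints of a matching, contributing at most $\mu(G)$ extra moves. This suggests fixing in advance a maximum matching $M$ of $G$ and then designing a schedule whose double moves occur only at endpoints of edges of $M$, at most once per edge.

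For subcubic graphs I would proceed by induction on $n(G)$. The inductive step searches for a reducible configuration whose removal decreases $n$ and $\mu$ compatibly. Three natural cases arise: if $G$ has a vertex $v$ of degree at most $1$ that is $M$-unmatched, recolor $v$ in one move and apply induction to $G - v$; if $v$ has degree $1$ and is matched to $u$, peel off the pair $\{u,v\}$ using at most $3$ moves, then induct on $G - \{u,v\}$ and an appropriate submatching of size $\mu - 1$; otherwise every vertex has degree $2$ or $3$, and I would select a matched edge $uv$ whose deletion is compatible with the Gallai–Edmonds structure, recolor $\{u,v\}$ using the slack $|L(u)|, |L(v)| \ge d(\cdot) + 2 \ge 4$, and induct. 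The case analysis stays tractable because $\Delta(G) \le 3$ bounds the length of any unblocking chain locally, and auxiliary colors always exist.

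For complete multipartite graphs $G = K_{n_1, \ldots, n_t}$, every vertex $v$ in part $V_i$ has $d(v) = n - n_i$ and $|L(v)| \ge n - n_i + 2$, so there are always at least two free colors available. I would split along the well-known formula for the matching number. In the balanced case where no part exceeds $n/2$ one has $\mu(G) = \lfloor n/2 \rfloor$ and a near-perfect matching exists; processing $G$ by matched pairs, each handled in at most $3$ moves, gives the bound directly. In the dominant case where a single part $V_{j^*}$ has more than $n/2$ vertices, the $M$-unmatched vertices lie inside $V_{j^*}$ and form an independent set, so they conflict only with vertices in other parts and can be recolored with a single move each; the remaining matched substructure is handled as in the balanced case.

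The hardest part will be ensuring that the set of double-move vertices actually forms (a subset of) a matching rather than a larger vertex set with coincident endpoints. For subcubic graphs the low degree helps, but the corner cases tied to odd cycles and odd components of the Gallai–Edmonds decomposition will likely need delicate local arguments. For complete multipartite graphs the difficulty is global: many vertices compete for the same scarce colors, and I expect the schedule to be produced by a Hall-type matching on an auxiliary bipartite graph whose one side records vertices that need temporary storage and the other side records free storage opportunities. Designing this auxiliary structure correctly, so that admissible schedules are exactly the matchings of size at most $\mu$, is likely the crux of both proofs.
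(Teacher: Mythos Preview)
Your proposal is a plan rather than a proof, and in each class the decisive case is left open.

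For subcubic graphs, your Case~3 (minimum degree $\ge 2$) is the heart of the matter. You propose to ``select a matched edge $uv$\ldots\ recolor $\{u,v\}$\ldots\ and induct''; for the arithmetic to close you must make $\alpha$ and $\beta$ agree on $\{u,v\}$ in at most three steps (removing two adjacent vertices drops $n$ by $2$ and $\mu$ by $1$). But for a degree-$3$ vertex $u$ one only has $|L(u)|\ge 5$ while $|\alpha(N(u))\cup\beta(N(u))|$ can be $6$, so no free color need exist, and there is no argument that \emph{some} edge admits a $3$-step agreement. The paper does not peel off a single edge. It studies the color graphs $G_c$ (vertices colored $c$ in $\alpha$ or $\beta$, edges for conflicts over $c$) and shows: if every component of every $G_c$ has size $\le 2$, the color-shift digraph is a disjoint union of directed cycles and three consecutive vertices give a $3$-step reduction; otherwise some component $V_0$ has size $\ge 3$, and a K\H{o}nig/vertex-cover argument fixes all of $V_0$ in $|V_0|+|W|$ steps with $|W|\le \mu(G_c[V_0])$. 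Neither idea is suggested by your outline, and Gallai--Edmonds does not supply them.

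For complete multipartite graphs, ``processing $G$ by matched pairs, each handled in at most $3$ moves'' is not a valid schedule: recoloring one pair changes the colors later pairs see, and nothing bounds the cascade. You yourself flag this as ``likely the crux'' but give no construction; the Hall-type auxiliary matching you allude to is never defined, let alone shown to exist. The paper's route is global rather than pair-by-pair: in a minimal counterexample with no dominant part it proves $|\alpha(G)|\le \tfrac{2}{3}n$ via a delicate analysis of colors appearing exactly once (and their ``cousins'' in the other coloring), then uses this bound to show every vertex of the third-largest part $I_3$ has a free color, after which $I_3$ together with its in/out-neighbourhood in the color-shift digraph can be settled within budget. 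The $\tfrac{2}{3}n$ color bound is the key lemma and has no analogue in your plan.
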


They provided significant evidence for their conjecture, proving it for cases where $|L(v)| \geq 2d(v)$ or where the diameter bound is relaxed to $n(G) + 2\mu(G)$. 
Additionally, they showed that the conjecture is tight for every graph and that it holds for trees, complete bipartite graphs, and cacti. 
In this paper, we answer the questions asked by Cambie et al. as to whether Conjecture~\ref{conj} is true for two additional classes of graphs: subcubic graphs and complete multipartite graphs.
For both classes, we prove the theorem by contradiction using a minimal counter-example.

\smallskip
\noindent
\textbf{Subcubic graphs.} \emph{Subcubic graphs} are graphs with maximum degree at most $3$. In \cite{cambie2024optimally}, Cambie et al. proved a version of Conjecture~\ref{conj} for subcubic graphs within the context of correspondence colorings. 
In this paper, we complement their result by proving Conjecture~\ref{conj} for subcubic graphs in the list-coloring setting:

\begin{restatable}{theorem}{subcubic}\label{thm:subcubic}
    Let $G$ be a subcubic graph. If a list-assignment $L$ satisfies $|L(v)| \ge d(v) + 2$ for every vertex $v$, then $\diam(C_L(G)) \le n(G) + \mu(G)$.
\end{restatable}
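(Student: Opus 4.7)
My approach is to argue by contradiction from a minimum counter-example: let $(G,L,\alpha,\beta)$ be a 4-tuple with $G$ subcubic, $|L(v)|\ge d(v)+2$ for every $v$, and $\alpha,\beta$ proper $L$-colorings with $d_{C_L(G)}(\alpha,\beta)>n(G)+\mu(G)$, chosen so that $n(G)$ is minimum (with ties broken by $|E(G)|$). First I would establish standard structural restrictions. Since $n+\mu$ is additive on connected components, $G$ is connected. Next, every vertex has degree at least $2$: if $u$ is a leaf with neighbour $v$, the minimality hypothesis applied to $G'=G-u$ (where the list-size condition still holds, since $|L(v)|\ge d_G(v)+2 = d_{G'}(v)+3$) gives a recoloring sequence $\sigma'$ in $G'$ of length at most $n(G)-1+\mu(G')$. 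I would extend $\sigma'$ to a sequence in $G$ by (i) inserting, before any step of $\sigma'$ that would recolor $v$ to the current colour of $u$, a move of $u$ to a temporary colour in $L(u)$ (available because $|L(u)|\ge 3$ and only the current and next colours of $v$ need to be avoided), and (ii) closing with a single move bringing $u$ to $\beta(u)$. A careful amortisation, exploiting the extra colour that becomes available to $v$ in $G'$ and splitting on whether $\mu(G-u)=\mu(G)$ or $\mu(G)-1$, shows the resulting length does not exceed $n(G)+\mu(G)$, contradicting minimality. Analogous but more delicate reductions using the surplus colour at a deleted vertex's surviving neighbour should rule out small configurations around degree-$2$ vertices (adjacent degree-$2$ vertices, triangles through a degree-$2$ vertex, etc.).

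After these reductions $G$ is essentially cubic and every list has size at least $5$, so the bound becomes tight: the budget $n(G)+\mu(G)$ can be rewritten as $3\mu(G)+(n(G)-2\mu(G))$, which is exactly one move per vertex outside a fixed maximum matching $M$, plus three moves per matched edge. My plan for this case is to construct such a schedule. Locally, any matched edge $uv\in M$ whose two endpoints need to change can be reconfigured in three moves: park $v$ on a third colour chosen from $L(v)\setminus(\{\alpha(u),\beta(u)\}\cup\{\text{current colours of }N(v)\setminus\{u\}\})$, nonempty because $|L(v)|\ge 5$ and at most four colours are forbidden, then move $u$ to $\beta(u)$, and finally $v$ to $\beta(v)$. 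Each vertex $w\in V(G)\setminus V(M)$ needs just one move, provided $\beta(w)$ avoids its neighbours' current colours at the moment $w$ is processed. Summing these local bounds yields the desired global bound of $n(G)+\mu(G)$.

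The main obstacle, and where I expect the bulk of the case analysis to live, is orchestrating these local schedules into a single globally proper recoloring sequence. Processing matched edges in an arbitrary order is problematic because the endpoints of adjacent edges of $M$ are themselves at their $\alpha$-colours and may block the intended intermediate colour; processing unmatched vertices first runs into the symmetric issue. My plan is to order the vertices and matched edges according to some auxiliary structure — for instance, a depth-first traversal of the contracted graph $G/M$, or a decomposition of the support $\{v:\alpha(v)\ne\beta(v)\}$ into $\alpha$-$\beta$ alternating paths and cycles — so that when an edge of $M$ is processed, enough of its neighbourhood is already at $\beta$-colours to guarantee that a valid temporary colour exists. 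A focused case distinction will then handle the few genuinely tight obstructions (for example short Kempe-type alternating cycles, where the $+\mu$ margin is fully consumed), which is where the tightness of the conjecture manifests itself and where the contradiction with the minimal counter-example assumption will ultimately be derived.
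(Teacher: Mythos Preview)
Your proposal has a genuine gap: the entire difficulty of the theorem lies in the scheduling problem you yourself identify as ``the main obstacle,'' and you do not solve it. Your local plan --- three moves per matched edge $uv\in M$ (park $v$, send $u$ to $\beta(u)$, send $v$ to $\beta(v)$), one move per unmatched vertex --- already fails on tiny examples. On a triangle $uvw$ with $\alpha=(1,2,3)$, $\beta=(2,3,1)$, lists $\{1,2,3,4\}$, and $M=\{uv\}$, your gadget for $uv$ ends with $v\to\beta(v)=3$, which is blocked by $w$ still sitting at $\alpha(w)=3$; but $w$ cannot be processed first either, since $\beta(w)=1=\alpha(u)$. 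The only length-$4$ sequences here \emph{interleave} the processing of $uv$ and $w$, so no concatenation of your local gadgets can work, regardless of the order. Appealing to ``a DFS of $G/M$'' or ``alternating paths and cycles'' does not address this: the third move of your matched-edge gadget has zero slack (it needs $\beta(v)$ to avoid the current colours of $N(v)\setminus\{u\}$), and those neighbours may themselves be matched vertices still at their $\alpha$-colours. Your preliminary reductions are also shakier than advertised: the leaf argument, as written, inserts a move of $u$ every time $\sigma'$ sends $v$ to $u$'s current colour, which can happen arbitrarily often, and the promised ``careful amortisation'' is not shown; the clean variant (park $u$ once on a fixed colour removed from $L(v)$) costs two moves for $u$ and overshoots by one unless $\mu(G-u)=\mu(G)-1$.

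The paper proceeds along an entirely different axis and never attempts a global schedule. For each colour $c$ it forms the \emph{colour graph} $G_c$ on the vertices coloured $c$ in $\alpha$ or $\beta$, with edges the conflicts over $c$. First, no $G_c$ has an isolated vertex (else recolor it in one step and induct). Second, some $G_c$ has a component $V_0$ of size $\ge 3$ (else the colour-shift digraph is a disjoint union of directed cycles, and three consecutive vertices are fixed in three steps, saving an edge of the matching). Now take a minimum vertex cover $W$ of $G_c[V_0]$ containing no degree-$1$ vertex. One makes $\alpha$ and $\beta$ agree on all of $V_0$ in at most $|V_0|+|W|$ steps: each $v\in W$ is settled in at most two steps (either via a free colour, or --- and this is where $d(v)\le 3$ is actually used --- by first pushing its unique out-neighbour in $\overrightarrow{D_{\alpha,\beta}}$ and then moving $v$), after which each vertex of $V_0\setminus W$ goes to $c$ in one step. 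K\H{o}nig's theorem gives $|W|\le\mu(G_c[V_0])$, hence $|W|+\mu(G-V_0)\le\mu(G)$, and minimality finishes. The structural idea you are missing is precisely this colour-graph/vertex-cover reduction, which replaces the global scheduling problem by a local computation on a single colour class.
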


To achieve this result, we analyze the subgraphs induced by each color in both colorings. First, we show that one of these subgraphs contains a connected component $C$ of size $3$.
We then show that this component $C$ can be recolored in a few steps, ultimately leading to a contradiction.

\smallskip
\noindent
\textbf{Complete multipartite graphs.} Cambie et al. established that the conjecture holds for complete bipartite graphs. 
A natural generalization of these graphs is the \emph{complete multipartite} graph (also known as \emph{complete $r$-partite} graph), where the vertex set is partitioned into $r$ independent sets $V_1, \dots, V_r$, and each vertex in $V_i$ is adjacent to every vertex in $V_j$ for all $1 \leq i \neq j \leq r$. 
In this paper, we prove Conjecture~\ref{conj} for this broader class of graphs:

\begin{restatable}{theorem}{rpartite}\label{thm:rpartite}
    Let $G$ be a complete multipartite graph. If $L$ is a list-assignment such that $|L(v)| \ge d(v) + 2$ for every vertex $v$, then $\diam(C_L(G)) \le n(G) + \mu(G)$.
\end{restatable}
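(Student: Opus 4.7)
I would adopt the same minimal counter-example framework used for subcubic graphs. Suppose for contradiction that $(G,L,\alpha,\beta)$ is a counter-example of minimum order: $G$ is complete multipartite with parts $V_1,\dots,V_r$, $L$ satisfies $|L(v)|\ge d(v)+2$, and the $L$-recoloring distance between $\alpha$ and $\beta$ strictly exceeds $n(G)+\mu(G)$. The first reduction I would carry out is to show that $\alpha(v)\ne\beta(v)$ for every vertex $v$. If $\alpha(v)=\beta(v)$ for some $v\in V_i$, then $G-v$ is complete multipartite and still satisfies the list condition, so by minimality there is a recoloring path between $\alpha|_{G-v}$ and $\beta|_{G-v}$ of length at most $(n(G)-1)+\mu(G-v)$. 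Since $|L(v)|\ge d(v)+2$, one can temporarily park $v$ on a spare color whenever a neighbour transiently needs $\alpha(v)$ during this path and then restore $v$; accounting for $\mu(G)-\mu(G-v)\in\{0,1\}$ shows the lifted path fits within the desired bound, contradicting minimality.

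The second ingredient is the simple but crucial structural observation that in any proper coloring of a complete multipartite graph, each color class lies entirely in a single part, so the sets $C^{\alpha}_i$ of colors used by $\alpha$ on $V_i$ are pairwise disjoint as $i$ ranges over the parts (and likewise for $\beta$). Consequently, a vertex $v\in V_i$ is blocked from being recolored directly to $\beta(v)$ precisely when $\beta(v)\in C^{\alpha}_j$ for some $j\ne i$, i.e.\ when some neighbor currently carries that color. I would build a dependency digraph $D$ on the parts, with an arc $V_i\to V_j$ whenever some $v\in V_i$ has $\beta(v)\in C^{\alpha}_j$, process the strongly connected components of $D$ in an appropriate topological order, and within each component recolor vertex by vertex. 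A vertex whose target color is free at its turn is recolored in one move; any other vertex is detoured through a spare color (available by the list condition) and then moved to its target, at a cost of two moves.

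The main obstacle is to bound the number of such two-move vertices by $\mu(G)$. Each detoured $v\in V_i$ comes with a witness neighbor $u\in V_j$ ($j\ne i$) satisfying $\alpha(u)=\beta(v)$, so $\{v,u\}$ is an edge of $G$, and the goal is to choose the order of recolorings inside each strongly connected component of $D$ so that the collection of witness edges forms a matching of $G$. I expect this to follow from the pairwise disjointness of the color sets across parts, together with the cyclic structure of each strongly connected block: within such a block one can essentially orient a cycle ``$V_{i_1}$ wants from $V_{i_2}$ wants from $\dots$ wants from $V_{i_1}$'' and select every other witness edge. Ensuring that witness edges from different components remain pairwise non-adjacent is the most delicate point and will likely demand additional structural reductions on $\alpha$ and $\beta$ beyond the first one, for instance forcing each $|C^{\alpha}_i\cap C^{\beta}_i|$ to be small and each part $V_i$ to be ``used up'' efficiently. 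Once a matching of size at most $\mu(G)$ has been produced, the explicit recoloring has length at most $n(G)+\mu(G)$, contradicting the minimality of the counter-example.
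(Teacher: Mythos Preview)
Your plan has two genuine gaps, one minor and one decisive.

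The minor one is the reduction to $\alpha(v)\ne\beta(v)$. Your parking argument does not work as written: if neighbours of $v$ transiently need the colour $\alpha(v)$ several times along the path in $G-v$, you pay two extra moves each time, and the slack $\mu(G)-\mu(G-v)\le 1$ cannot absorb this. The correct fix is to remove $\alpha(v)$ from the lists of all neighbours before invoking minimality; each neighbour then loses one from both its degree and its list, so the hypothesis persists, and the path in $G-v$ lifts to $G$ with no extra move at all. (The paper actually proves the stronger reduction that every vertex has a conflict in both colourings, via this list-restriction trick.)

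The decisive gap is the core of your plan: choosing a recolouring order so that the witness edges of detoured vertices form a matching. You do not prove this, and the sketch you give does not survive scrutiny. The dependency digraph on parts is too coarse: vertices in the same part may want colours from several different parts, and a strongly connected component of $D$ is in general not a cycle, so ``select every other witness edge'' has no meaning. Moreover, two detoured vertices in the same part can share a unique witness, and a vertex can simultaneously be detoured and serve as a witness for someone else; excluding these configurations seems to require exactly the fine structural control on $\alpha,\beta$ that you defer to unspecified ``additional reductions''. As it stands, this is a hope rather than an argument.

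The paper takes a completely different route, with no global ordering and no matching of witness edges. It splits on the size of the largest part $I_1$. If $|I_1|\ge |V(G)\setminus I_1|$, one shows every conflict involves $I_1$ and then finishes as in the complete bipartite case. Otherwise $G$ has an almost perfect matching, and the argument is a colour-counting one: a sequence of structural lemmas on colours appearing once or twice yields $|\alpha(G)|\le\tfrac{2}{3}\,n(G)$, whence every $v\in I_3$ has $|L(v)|\ge n(G)-|I_3|+2>\tfrac{2}{3}\,n(G)\ge|\alpha(G)|$ and therefore a free colour. This lets one recolour $I_3$ together with its in- and out-neighbourhoods $U,W$ in the colour-shift digraph in $|U|+|I_3|+|W|$ steps; further lemmas force $|U|,|W|<|I_3|$, and minimality on $G-I_3-W$ gives the contradiction.
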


The proof is divided into two parts. In the first part, we assume a minimal counter-example contains a large independent set, then conclude following a method similar to the one used for complete bipartite graphs. In the second part, where the independent sets are more balanced, we derive a contradiction by counting the colors in the graph, particularly focusing on the number of colors appearing only once or twice in one of the coloring.

\smallskip

In the context of $k$-colorings, Cambie et al.\cite{cambie2024optimally} proved that for a subcubic graph $G$, $\diam(C_k) \ge n(G) + \mu(G)$ when $k \ge \Delta(G)+2$. Moreover, Zeven~\cite{zeven2023reconfiguration} proved the same lower bound on $\diam(C_k(G))$ for complete multipartite graphs when $k \ge \Delta(G)+2$.
Together with Theorem~\ref{thm:subcubic} and Theorem~\ref{thm:rpartite}, these results gives the exact value of the diameter of the $k$-reconfiguration graph of subcubic graphs and complete multipartite graphs.

\begin{corollary}
    Let $G$ be a subcubic graph. If $k \ge \Delta(G) + 2$, then $\diam(C_k(G)) = n(G) + \mu(G)$.
\end{corollary}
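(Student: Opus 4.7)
The plan is to derive the corollary as an immediate consequence of Theorem~\ref{thm:subcubic} combined with the lower bound attributed to Cambie et al.\ in the paragraph preceding the statement. Since the corollary asserts an equality, I would establish the two inequalities separately.

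For the upper bound $\diam(C_k(G)) \le n(G) + \mu(G)$, I would apply Theorem~\ref{thm:subcubic} to the constant list-assignment $L$ defined by $L(v) = \{1, \dots, k\}$ for every vertex $v$ of $G$. Under this assignment, the $L$-colorings of $G$ are exactly the $k$-colorings of $G$, and an edge of $C_L(G)$ corresponds exactly to an edge of $C_k(G)$, so $C_L(G) = C_k(G)$ as graphs. The hypothesis $k \ge \Delta(G) + 2$ yields $|L(v)| = k \ge \Delta(G) + 2 \ge d(v) + 2$ for every $v$, so Theorem~\ref{thm:subcubic} applies and gives the desired inequality.

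For the lower bound $\diam(C_k(G)) \ge n(G) + \mu(G)$, I would simply invoke the result of Cambie et al.~\cite{cambie2024optimally} cited just above the corollary, which establishes precisely this lower bound for every subcubic graph $G$ whenever $k \ge \Delta(G) + 2$. Putting the two bounds together gives the claimed equality.

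There is essentially no obstacle here; the corollary is a one-line assembly of Theorem~\ref{thm:subcubic} with a known lower bound, and the only point to check is the (immediate) observation that a constant list-assignment reduces the list-recoloring graph to the ordinary $k$-recoloring graph.
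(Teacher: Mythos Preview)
Your proposal is correct and matches the paper's own justification: the corollary is stated without a separate proof, being obtained exactly as you describe by combining the upper bound of Theorem~\ref{thm:subcubic} (specialized to the constant list-assignment $L(v)=\{1,\dots,k\}$) with the lower bound of Cambie et al.~\cite{cambie2024optimally} cited immediately before the statement.
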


\begin{corollary}
    Let $G$ be a complete multipartite graph. If $k \ge \Delta(G) + 2$, then $\diam(C_k(G)) = n(G) + \mu(G)$.
\end{corollary}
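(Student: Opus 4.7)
The plan is to combine the two ingredients already on the table: Theorem~\ref{thm:rpartite}, which is the main result of the paper, and Zeven's lower bound stated in the paragraph just before the corollary. So the proof will be essentially a one-paragraph deduction with no new combinatorics; there is no real obstacle here beyond checking that the hypotheses line up.

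For the upper bound, I would specialize Theorem~\ref{thm:rpartite} to the constant list-assignment $L(v) = \{1, \dots, k\}$ for every vertex $v$ of $G$. Under the hypothesis $k \ge \Delta(G) + 2$, one has $|L(v)| = k \ge \Delta(G) + 2 \ge d(v) + 2$, so Theorem~\ref{thm:rpartite} applies and gives $\diam(C_L(G)) \le n(G) + \mu(G)$. Since the $L$-colorings of $G$ are exactly its proper $k$-colorings and the adjacency relation coincides, we have $C_k(G) = C_L(G)$, yielding $\diam(C_k(G)) \le n(G) + \mu(G)$.

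For the matching lower bound, I would directly invoke Zeven's result \cite{zeven2023reconfiguration}, which gives $\diam(C_k(G)) \ge n(G) + \mu(G)$ for every complete multipartite graph $G$ as soon as $k \ge \Delta(G) + 2$. Putting the two inequalities together yields the claimed equality $\diam(C_k(G)) = n(G) + \mu(G)$.

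The only thing worth double-checking along the way is that Theorem~\ref{thm:rpartite} is really applicable in the list setting with constant lists (it is, since a constant list-assignment is a particular case), and that Zeven's lower bound is stated in the same regime $k \ge \Delta(G) + 2$ rather than some stricter threshold; the paper explicitly asserts the latter in the paragraph preceding the corollary, so no extra work is needed.
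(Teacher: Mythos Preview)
Your proposal is correct and matches the paper's approach exactly: the corollary is stated without proof in the paper, as it follows immediately from Theorem~\ref{thm:rpartite} (specialized to the constant list-assignment $L(v)=\{1,\dots,k\}$) for the upper bound, together with Zeven's lower bound~\cite{zeven2023reconfiguration} for $k \ge \Delta(G)+2$. There is nothing to add.
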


We also remark that these results on subcubic and multipartite graphs cannot be obtained for a smaller number of colors (or list-assignment with smaller lists). Indeed, cycles whose length is divisible by $3$ and cliques are simple counter-examples: they are respectively subcubic and complete multipartite graphs, whose $(\Delta(G)+1)$-reconfiguration graph is not connected. More recently, Cambie et al.~\cite{cambie2025reconfiguration} studied in greater details the behavior of reconfiguration graphs when the number of available colors is further restricted.

\smallskip
\noindent
\textbf{Organization of the paper.} After providing some definitions in Section~\ref{sec:prem}, we prove Theorem~\ref{thm:subcubic} in Section~\ref{sec:sub} and then Theorem~\ref{thm:rpartite} in Section~\ref{sec:rpart}.

\section{Preliminaries}\label{sec:prem}

\noindent
\textit{Basic definitions and notations.} 
Let $G$ be a graph and $v$ be a vertex of $G$. The set of neighbors of $v$ in $G$ is denoted by $N(v)$ and its degree in $G$ is denoted by $d_G(v)$. 
We denote by $G[C]$ the subgraph of $G$ induced by $C$. 
A \emph{connected component $C$} of a graph is a maximal by inclusion subset of vertices such that $G[C]$ is connected. 

\medskip 
\noindent
\textit{Matchings.} A \emph{matching} $M$ of a graph $G$ is a set of pairwise disjoint edges. A matching $M$ \emph{covers} a vertex $v$ if $v$ is an endpoint of an edge of $M$. A matching is \emph{perfect} (resp. \emph{almost perfect}) if it covers every vertex (resp. every vertex but at most one).

\medskip

\noindent
\textit{Colorings.} 
In the figures, throughout the paper, we represent the initial coloring at the centers of the vertices, and the target coloring on the borders of the vertices, see Figure~\ref{fig:grapha}. 
Let $L$ be a list-assignment on $G$ and $\alpha, \beta$ be two $L$-colorings of $G$. 
The \emph{distance between $\alpha$ and $\beta$} in $C_L(G)$, denoted by $\dist_G(\alpha, \beta)$, is the size of a shortest path from $\alpha$ to $\beta$ in $C_L(G)$.
The \emph{color-shift digraph} $\overrightarrow{D_{\alpha, \beta}}$ of $G$ is the digraph whose vertices are exactly the vertices of $G$ and whose arcs go from a vertex $u$ to a vertex $v$ if and only if $\beta(u) = \alpha(v)$. We denote by $D_{\alpha, \beta}$ its underlying multigraph. 
See Figure~\ref{fig:digraph} and~\ref{fig:colorshift} for illustrations. 

We say a vertex $v$ \emph{has a conflict over a color} $c$ if there exists at least one vertex $u$ adjacent to $v$ such that $u$ is colored $c$ in one coloring and $v$ is colored $c$ in the other coloring. We then say that $u$ and $v$ are \emph{in conflict over color} $c$.
Hence, two vertices are in conflict if and only if they are adjacent in the color-shift digraph.

\begin{figure}[ht]
        \centering
        \tikzstyle{v1}=[circle, minimum size=3pt, scale=0.6, fill]
        \tikzstyle{v2}=[circle,draw, minimum size=12pt, line width=2]
        \tikzstyle{fleche}=[arrows = {-Latex[width=4pt, length=4pt]}]
        \tikzstyle{labell}=[text opacity=1, scale =1]
        \begin{subfigure}[b]{.3\textwidth}
        \centering
        \begin{tikzpicture}[scale=1]
        
        \nodc(a1)(1.2,0)[\colord, \colorg];
        \nodc(a2)(0,0)[\colorg, \colord];
        \nodc(a3)(-1,-0.7)[\colore, \colorf];
        \nodc(a4)(-1,0.7)[\colorf, \colorg];

        \draw(a1) to (a2);
        \draw(a3) to (a4);
        \draw(a4) to (a2);
        \draw(a3) to (a2);

        \end{tikzpicture}
        \caption{The graph $G$}\label{fig:grapha}
        \end{subfigure}
        \begin{subfigure}[b]{.3\textwidth}
        \centering
        \begin{tikzpicture}[scale=1]

        \nodc(a1)(1.2,0)[\colord, \colorg];
        \nodc(a2)(0,0)[\colorg, \colord];
        \nodc(a3)(-1,-0.7)[\colore, \colorf];
        \nodc(a4)(-1,0.7)[\colorf, \colorg];

        \draw[fleche](a1) to[bend left =20] (a2);
        \draw[fleche](a2) to[bend left =20] (a1);
        \draw[fleche](a3) to (a4);
        \draw[fleche](a4) to (a2);

        \end{tikzpicture}
        \caption{The digraph $\overrightarrow{D_{\alpha,\beta}}$}\label{fig:digraph}
        \end{subfigure}
        \begin{subfigure}[b]{.3\textwidth}
        \centering
        \begin{tikzpicture}[scale=1]

        \nodc(a1)(1.2,0)[\colord, \colorg];
        \nodc(a2)(0,0)[\colorg, \colord];
        \nodc(a3)(-1,-0.7)[\colore, \colorf];
        \nodc(a4)(-1,0.7)[\colorf, \colorg];

        \draw(a1) to[bend left= 20] (a2);
        \draw(a2) to[bend left =20] (a1);
        \draw(a3) to (a4);
        \draw(a4) to (a2);

        \end{tikzpicture}
        \caption{The multigraph ${D_{\alpha,\beta}}$}\label{fig:colorshift}
        \end{subfigure}
        
        \caption{Two colorings $\alpha$ (at the centers) and $\beta$ (on the borders) of a graph $G$, the color-shift digraph $\overrightarrow{D_{\alpha,\beta}}$ and multigraph $D_{\alpha, \beta}$ of $G$. } 
        \label{fig:exfig}
\end{figure}
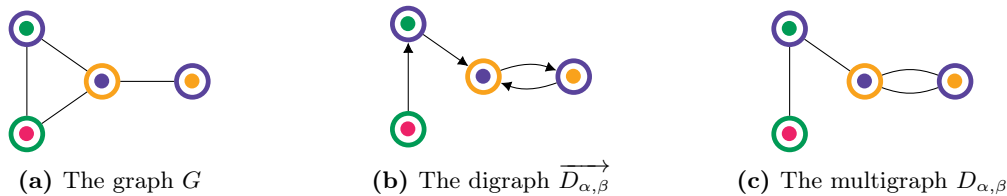 We say that a vertex $v$ has a \emph{free color $c$} in the colorings $\alpha$ and $\beta$ if $c \in L(v)$ and $c \notin \alpha(N(v)) \cup \beta(N(v))$.
Two colorings $\alpha$ and $\beta$ \emph{agree on} a set $S$ (or on a vertex when $S=\{v\}$) if $\alpha(v) = \beta(v)$ for every vertex $v$ in $S$.
Hence, if a vertex $v$ has a free color $c$, we can recolor $v$ with $c$ in both colorings to get colorings that agree on $v$.
Here, instead of constructing a sequence from a current coloring $\alpha$ to a target coloring $\beta$, we will construct two distinct sequences from $\alpha$ and $\beta$ to a common coloring~$\gamma$.

For simplicity, for any graph $G$, we define $\alpha(G) := \alpha(V(G))$.
We say that a color $c$ \emph{appears $k$ times in a set $U$ for a coloring $\alpha$} if $k$ vertices of $U$ are colored $c$ in the coloring $\alpha$. 
When the subset of vertices is ommited, we consider all vertices of the graph.

\medskip

\noindent
\textit{Minimal counter-example.} We prove Theorems~\ref{thm:subcubic} and~\ref{thm:rpartite} by contradiction. A \emph{minimal counter-example} $G$ is a graph that does not satisfy the theorem and such that every graph with fewer vertices satisfies the theorem. 
Hence, a minimal counter-example $G$ admits a list-assignment $L$ and two $L$-colorings $\alpha$ and $\beta$ whose distance is more than $n(G) + \mu(G)$.
To reach a contradiction, we will always use the same method: we construct new colorings $\gamma$ and $\zeta$ from $\alpha$ and $\beta$ such that $\gamma$ and $\zeta$ agree on a set of vertices $S$ of $G$. 
Then, we obtain a new graph $G'$ by removing $S$ from $G$ and a new list-assignment $L'$ from $L$ by removing the colors of the vertices of $S$ from the lists of their respective neighborhood. 
In this case, we say $L'$ is obtained by \emph{restricting} $L$ to $G'$.
Finally, using that we can recolor $G'$ from $\gamma$ to $\zeta$ using $n(G') + \mu(G')$ steps (by minimality), we prove that we can recolor $G$ from $\alpha$ to $\beta$ with $n(G) + \mu(G)$ steps.

\section{Subcubic graphs}\label{sec:sub}

\subcubic*

The rest of this section is devoted to proving Theorem~\ref{thm:subcubic}.
Let $G$ be a minimal counter-example.
Consider a list-assignment $L$ of $G$ such that $|L(v)| \ge d(v) +2$ for every vertex $v$ and two $L$-colorings $\alpha$ and $\beta$ such that $\dist_G(\alpha, \beta) > n(G) + \mu(G)$.
For each color $c$, we define the \emph{color graph} $G_c$ as the graph whose vertices are those in $V(G)$ colored $c$ in either $\alpha$ or $\beta$ and where two vertices are linked by an edge if they are in conflict over $c$ in $\alpha$ and $\beta$. 
Note that the union of the graphs $G_c$ for every $c$ forms the multigraph $D_{\alpha,\beta}$.
We conclude with several lemmas depending on the size of connected components of the color graphs.

\begin{lemma}\label{lem:cc1}
    No color graph contains an isolated vertex $v$.
\end{lemma}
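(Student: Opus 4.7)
The plan is to derive a contradiction from the existence of an isolated vertex $v$ in some color graph $G_c$ by reducing to the smaller graph $G - v$ and invoking minimality. I split into two cases depending on whether $\alpha$ and $\beta$ agree on $v$. Recall that $v \in V(G_c)$ means $\alpha(v) = c$ or $\beta(v) = c$, and being isolated means no neighbor of $v$ is colored $c$ in the opposite coloring (the ``same-coloring'' case being automatic from properness).

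First, I would handle the case $\alpha(v) = \beta(v) = c$. Here I form $G' = G - v$ and let $L'$ be obtained by restricting $L$ to $G'$ while also deleting $c$ from the list of each neighbor of $v$. The key verification is the degree-plus-two condition on $G'$: for any neighbor $u$ of $v$, $|L'(u)| \geq |L(u)| - 1 \geq d_G(u) + 1 = d_{G'}(u) + 2$, and for all other vertices nothing changes. Applying minimality to $(G', L', \alpha|_{G'}, \beta|_{G'})$ produces a recoloring sequence in $G'$ of length at most $n(G') + \mu(G') \leq n(G) - 1 + \mu(G)$. Since $L'$ forbids color $c$ on every neighbor of $v$, no step of this sequence ever colors a neighbor of $v$ with $c$, so it lifts directly to a recoloring of $G$ from $\alpha$ to $\beta$ of the same length, contradicting $\dist_G(\alpha, \beta) > n(G) + \mu(G)$.

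Second, suppose without loss of generality that $\alpha(v) = c$ and $\beta(v) \neq c$. The isolation of $v$ in $G_c$ means no neighbor of $v$ has $\beta$-color $c$, so changing $v$'s color to $c$ in $\beta$ yields a proper $L$-coloring $\beta^{*}$. Now $\alpha$ and $\beta^{*}$ agree on $v$, and applying the first case to the pair $(\alpha, \beta^{*})$ gives $\dist_G(\alpha, \beta^{*}) \leq n(G) - 1 + \mu(G)$. Adding the single recoloring step from $\beta$ to $\beta^{*}$ yields $\dist_G(\alpha, \beta) \leq n(G) + \mu(G)$, the desired contradiction.

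I do not expect a real obstacle; the only delicate points are the bookkeeping that removing one color from a neighbor's list combined with the drop in its degree preserves the degree-plus-two condition, and the trivial fact $\mu(G - v) \leq \mu(G)$ since any matching in a subgraph is a matching in the supergraph. Morally, this lemma establishes that in a minimal counter-example $\alpha$ and $\beta$ must disagree on every vertex, which will be used freely in the lemmas to come.
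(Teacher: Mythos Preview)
Your proof is correct and follows essentially the same approach as the paper: make $\alpha$ and $\beta$ agree on $v$ using at most one recoloring, delete $v$, and invoke minimality. The paper collapses your two cases into a single line (``by symmetry $\beta(v)=c$; recolor $v$ with $c$ in $\alpha$''), whereas you treat the already-agreeing case separately before reducing the disagreeing case to it, but the underlying argument is identical.
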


\begin{proof}
Assume by contradiction that some color graph $G_c$ contains an isolated vertex $v$.
Then, $v$ is colored $c$ in $\alpha$ or $\beta$, say the latter by symmetry.
And since it has no neighbors in $G_c$, $v$ has no conflict with its neighbors over the color $c$, i.e. no neighbor of $v$ is colored $c$ in $\alpha$. 
Hence, we define $\gamma$ as the coloring obtained from $\alpha$ by recoloring $v$ with $c$ in a single step. 
Note that $\gamma$ and $\beta$ agree on $v$. 
Let $G' = G[V \setminus \{ v \}]$ be the graph obtained from $G$ by removing $v$ and $L'$ be the restriction of $L$ to $G'$.
By minimality of $G$, we can recolor $G'$ from $\gamma$ to $\beta$ using at most $n(G') + \mu(G')$ steps.
Hence, we can recolor $G$ from $\alpha$ to $\beta$ using at most $1 + n(G') + \mu(G') \le n(G) + \mu(G)$ steps, a contradiction.
\end{proof}

Hence, from now on, we will assume that the color graphs do not contain isolated vertices.
The rest of the proof consists in first finding a connected component of size at least $3$ in a color graph of $G$, and then proving that we can recolor this connected component.

\begin{lemma}~\label{lem:cc2}
There exists a color $c$ such that $G_c$ contains a connected component of size at least $3$.
\end{lemma}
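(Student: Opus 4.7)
The plan is to argue by contradiction: assume that every connected component of every color graph $G_c$ has size at most $2$; combined with Lemma~\ref{lem:cc1}, every nonempty component then has size exactly $2$. I would first observe that this forces $\alpha(v) \neq \beta(v)$ at every vertex $v$, since if $\alpha(v) = \beta(v) = c$ then properness of $\alpha$ and $\beta$ prevents any $G$-neighbor of $v$ from being colored $c$, so $v$ would be isolated in $G_c$, contradicting Lemma~\ref{lem:cc1}. It follows that each vertex lies in exactly two color graphs, $G_{\alpha(v)}$ and $G_{\beta(v)}$, contributing one edge to each under the size-$2$ assumption, so $D_{\alpha,\beta}$ is $2$-regular. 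Its components are therefore either $2$-cycles (``swaps'' $\{u,v\}$ with $\alpha(u) = \beta(v)$ and $\alpha(v) = \beta(u)$) or simple cycles $v_1\cdots v_k$ of length $k \ge 3$. A short check using the size-$2$ hypothesis shows that on such a $k$-cycle one has $\{\alpha(v_i), \beta(v_i)\} = \{c_{i-1}, c_i\}$ for some cyclic color sequence $c_1, \ldots, c_k$, oriented so that $\alpha(v_i) = c_i$ and $\beta(v_i) = c_{i-1}$ (indices mod $k$).

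Next I would pick an arbitrary component $C$ of $D_{\alpha,\beta}$ and resolve it with few recolorings. For a swap $\{u,v\}$ with $\alpha(u) = a = \beta(v)$ and $\alpha(v) = b = \beta(u)$, I would choose an intermediate $c \in L(u) \setminus (\alpha(N(u)) \cup \{a\})$, which exists since $|L(u)| \ge d(u) + 2$, and perform $u \to c$, then $v \to a$, then $u \to b$, for a total of $3$ steps. The size-$2$ hypothesis legalises the latter two steps: for example, $u$ is the only $G$-neighbor of $v$ with $\alpha$-value $a$, since otherwise $v$'s component in $G_a$ would contain at least three vertices. For a $k$-cycle, I would pick an intermediate color for $v_1$, then perform $v_{i+1} \to c_i$ in order for $i = 1, \ldots, k-1$, and finally $v_1 \to c_k$, for $k+1$ steps in total. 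The bulk of the argument lies here: I would need to check that at each step the target color $c_i$ is currently absent from the neighborhood of the recolored vertex, which follows from the properness of $\beta$ (excluding conflicts within the cycle) and the size-$2$ hypothesis (excluding conflicts with neighbors outside $C$, since any such neighbor colored $c_i$ in $\alpha$ would enlarge the $G_{c_i}$-component of $v_{i+1}$).

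Finally, I would apply the minimality of $G$ on $G' = G \setminus V(C)$, with $L'$ the restriction of $L$. Writing $\gamma$ for the coloring obtained from $\alpha$ by the above resolution, $\gamma$ and $\beta$ agree on $V(C)$, so by minimality we can recolor from $\gamma$ to $\beta$ in $G'$ using at most $n(G') + \mu(G')$ steps. The crucial matching-number bound is that $\mu(G') \le \mu(G) - 1$ in both cases: in the swap case because the $G$-edge $uv$ can be appended to any matching of $G'$, and in the $k$-cycle case because $V(C)$ already carries an internal matching of size $\lfloor k/2 \rfloor \ge 1$. Combining the resolution cost with this inductive bound yields $\dist_G(\alpha, \beta) \le 3 + (n(G) - 2) + (\mu(G) - 1) = n(G) + \mu(G)$ in the swap case and $\dist_G(\alpha, \beta) \le (k+1) + (n(G) - k) + (\mu(G) - 1) = n(G) + \mu(G)$ in the $k$-cycle case, both contradicting the assumption $\dist_G(\alpha, \beta) > n(G) + \mu(G)$.
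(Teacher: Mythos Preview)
Your swap case is fine and coincides with the paper's argument for bidirected arcs, but your $k$-cycle case ($k\ge 3$) has a genuine gap. When you recolor $v_k \to c_{k-1}$, the vertex $v_1$ is a $G$-neighbour of $v_k$ and still carries the intermediate colour $c^*$, so you need $c^*\neq c_{k-1}$. Neither ``properness of $\beta$'' (since $c^*$ is not $\beta(v_1)$) nor the size-$2$ hypothesis (since $c^*$ is not an $\alpha$-colour) gives you this. Your stated choice $c^*\in L(v_1)\setminus(\alpha(N(v_1))\cup\{c_1\})$ excludes at most $d(v_1)+1$ colours, and adding the extra constraint $c^*\neq c_{k-1}$ brings the forbidden set to size $d(v_1)+2$, which may exhaust $L(v_1)$. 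Concretely, take $G=C_4$ with $L(v_i)=\{c_1,c_2,c_3,c_4\}$, $\alpha(v_i)=c_i$, $\beta(v_i)=c_{i-1}$; every colour-graph component has size exactly $2$, yet the only admissible intermediate is $c^*=c_3$, after which $v_4\to c_3$ is blocked by $v_1$. In fact $\dist(\alpha,\beta)=6>k+1=5$ here, so no $(k+1)$-step resolution of the whole cycle exists at all; the inequality $(k+1)+(n-k)+(\mu-1)\le n+\mu$ is therefore not achievable in general.

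The paper sidesteps this by \emph{not} resolving the whole cycle: it picks three consecutive vertices $u\to v\to w$ in $\overrightarrow{D_{\alpha,\beta}}$, recolours $w$ with any colour in $L(w)\setminus(\alpha(N(w))\cup\{\beta(v)\})$ (only $d(w)+1$ forbidden colours, so this always succeeds), then sets $v\to\beta(v)$ and $u\to\beta(u)$, and finally deletes only $\{u,v\}$. The crucial point is that $w$ keeps its intermediate colour throughout, so only its two immediate successors in the sequence need to be compatible with it, and that compatibility is built into the choice. This gives $3+(n-2)+(\mu-1)=n+\mu$ uniformly for every cycle length, which is exactly your swap accounting applied verbatim. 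Replacing your $k$-cycle step with this three-vertex manoeuvre fixes the proof.
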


\begin{proof}
Assume by contradiction that, for every color $c$, $G_c$ only contains connected components of size at most $2$.
By Lemma~\ref{lem:cc1}, every connected component of the color graphs contains exactly two vertices. 
Thus, the color-shift digraph $\overrightarrow{D_{\alpha, \beta}}$ only contains vertices of indegree $1$ and outdegree $1$, that is the union of vertex-disjoint directed cycles (possibly being bidirected arcs).

Let $C$ be a cycle of $\overrightarrow{D_{\alpha, \beta}}$ and $u,v,w$ be three consecutive vertices of $C$, with possibly $u = w$ if $C$ is a bidirected arc.
Let $\gamma$ be the coloring obtained from $\alpha$ as follows: 
first recolor $w$ with a color of $L(w) \setminus (\alpha(N(w)) \cup \{ \beta(v) \})$ (which is possible since $|L(w)| \ge d(w) + 2$).
Then we can recolor $v$ with $\beta(v) = \alpha(w)$ and finally recolor $u$ with $\beta(u) = \alpha(v)$. See Figure~\ref{fig:cc2} for an illustration.

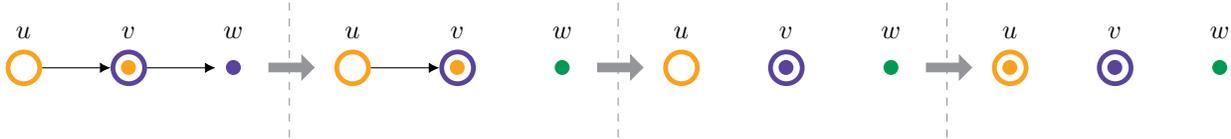
\begin{figure}[ht]
    \centering
        \tikzstyle{v1}=[circle, minimum size=3pt, scale=0.6, fill]
        \tikzstyle{v2}=[circle,draw, minimum size=12pt, line width=2]
        \tikzstyle{fleche}=[arrows = {-Latex[width=4pt, length=4pt]}]
        \tikzstyle{labell}=[text opacity=1, scale =1]
        \tikzstyle{garrow}=[single arrow, draw, 
      minimum width =1mm, single arrow head extend=3pt, inner sep=0.5mm,
      minimum height=6mm, fill=\colora, \colora]
        \begin{tikzpicture}[scale=0.93]

        \nodc(a1)(1.5,0)[\colorg, none];
        \nodc(a2)(0,0)[\colord, \colorg];
        \nodc(a3)(-1.5,0)[none, \colord];

        \draw[fleche] (a3) to (a2);
        \draw[fleche] (a2) to (a1);

        \node (h0) at (1.5, 0.5) [labell]{$w$};
        \node (h0) at (0, 0.5) [labell]{$v$};
        \node (h0) at (-1.5, 0.5) [labell]{$u$};

        \draw[dashed, \colora] (2.3, -1) to  (2.3, 1);
        \node (f1) at (2.3,-0) [garrow, fill=\colora, \colora]{};
        
        \tikzset{xshift=4.7cm}

        \nodc(a1)(1.5,0)[\colorf, none];
        \nodc(a2)(0,0)[\colord, \colorg];
        \nodc(a3)(-1.5,0)[none, \colord];

        \draw[fleche] (a3) to (a2);

        \node (h0) at (1.5, 0.5) [labell]{$w$};
        \node (h0) at (0, 0.5) [labell]{$v$};
        \node (h0) at (-1.5, 0.5) [labell]{$u$};

        \draw[dashed, \colora] (2.3, -1) to  (2.3, 1);
        \node (f1) at (2.3,-0) [garrow, fill=\colora, \colora]{};
        
        \tikzset{xshift=4.7cm}

        \nodc(a1)(1.5,0)[\colorf, none];
        \nodc(a2)(0,0)[\colorg, \colorg];
        \nodc(a3)(-1.5,0)[none, \colord];

        \node (h0) at (1.5, 0.5) [labell]{$w$};
        \node (h0) at (0, 0.5) [labell]{$v$};
        \node (h0) at (-1.5, 0.5) [labell]{$u$};

        \draw[dashed, \colora] (2.3, -1) to  (2.3, 1);
        \node (f1) at (2.3,-0) [garrow, fill=\colora, \colora]{};

        \tikzset{xshift=4.7cm}

        \node (h0) at (1.5, 0.5) [labell]{$w$};
        \node (h0) at (0, 0.5) [labell]{$v$};
        \node (h0) at (-1.5, 0.5) [labell]{$u$};

        \nodc(a1)(1.5,0)[\colorf, none];
        \nodc(a2)(0,0)[\colorg, \colorg];
        \nodc(a3)(-1.5,0)[\colord, \colord];

        \end{tikzpicture}
        \caption{Recoloring sequence of Lemma~\ref{lem:cc2} for three consecutive vertices $u,v$ and $w$ in $\overrightarrow{D_{\alpha, \beta}}$. We only represent some arcs of the color-shift digraph. } 
        \label{fig:cc2}
\end{figure} 
We get $\gamma$ from $\alpha$ in three steps and $\gamma,\beta$ agree on two adjacent vertices $u$ and $v$. 
Let $G' := G[V\setminus\{u,v\}]$ and let $L'$ be the restriction of $L$ to $G'$.
By minimality of $G$, we can recolor $G'$ from $\gamma$ to $\beta$ in at most $n(G') + \mu(G')$ steps.
Since $u$ and $v$ are adjacent in $G$, we can add $uv$ to any matching of $G'$ to obtain a matching of $G$, so $\mu(G') \le \mu(G) - 1$ . 
Hence, we can recolor $G$ from $\alpha$ to $\beta$ in at most $n(G') + \mu(G') + 3 \le n(G) - 2 + \mu(G) - 1 + 3 \le n(G) + \mu(G)$ steps, a contradiction. 
\end{proof}

Let $c$ be a color such that $G_c$ admits a connected component $V_0$ of maximum size and let $G_0 := G_c[V_0]$. 
By Lemma~\ref{lem:cc2}, $G_0$ contains at least $3$ vertices. 
Since two adjacent vertices cannot be colored the same, $G_0$ is bipartite with a bipartition $(A,B)$ where $A$ is the set of vertices in $V_0$ colored $c$ in $\alpha$ and $B$ the set of vertices in $V_0$ colored $c$ in $\beta$.

We will conclude the proof using a \emph{vertex cover} of $G_0$, that is a set $W$ of vertices in $V_0$ such that every edge of $G_0$ has at least one endpoint in $W$.
We first prove the existence of a minimum vertex cover $W$ of $G_0$ that does not contain vertices of degree at most $1$ (Lemma~\ref{cl:vcex}). 
Then, we will use this vertex cover to recolor the vertices in $V_0$ and conclude by minimality of $G$ (Lemma~\ref{cl:agreeV0}).

\begin{lemma}\label{cl:vcex}
    Let $G$ be a connected graph on at least $3$ vertices. Then $G$ has a vertex cover $W$ with no vertex of degree $1$.
\end{lemma}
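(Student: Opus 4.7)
The plan is to produce an explicit vertex cover, namely $W := V(G) \setminus L$, where $L \subseteq V(G)$ is the set of vertices of degree exactly $1$ (the leaves of $G$). With this choice the second requirement of the lemma is automatic: no element of $W$ has degree $1$ in $G$.

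The main thing to verify is that $W$ is indeed a vertex cover. For this I would rely on the observation that under the hypotheses of the lemma, \emph{no two leaves of $G$ can be adjacent}. Indeed, if $u$ and $v$ were adjacent vertices both of degree $1$, then the unique edge incident to each would be $uv$, so $\{u,v\}$ would form a connected component of $G$ on two vertices, contradicting the assumption that $G$ is connected with $|V(G)| \geq 3$. Consequently every edge of $G$ has at least one endpoint outside $L$, hence in $W$, so $W$ covers $E(G)$.

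I do not foresee any real obstacle: the argument is essentially a two-line unpacking of definitions, and the only structural content is ruling out the degenerate case $G = K_2$, which is precisely what both hypotheses (connectedness and $|V(G)| \geq 3$) jointly achieve.
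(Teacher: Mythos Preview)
Your argument is correct and very clean: taking $W = V(G) \setminus L$ with $L$ the set of leaves, and noting that two leaves cannot be adjacent in a connected graph on at least three vertices, immediately yields a vertex cover containing no degree-$1$ vertex.

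This is a genuinely different (and more elementary) route than the paper's. The paper instead starts from a \emph{minimum} vertex cover $W'$ and replaces each degree-$1$ vertex of $W'$ by its unique neighbour; minimality of $W'$ guarantees that this neighbour was not already in $W'$, and connectedness with $|V(G)|\ge 3$ guarantees the neighbour has degree at least $2$. The point of the paper's detour is that the resulting $W$ has the same size as $W'$, hence is itself a \emph{minimum} vertex cover. This extra property is not stated in the lemma but is used immediately afterwards: the paper applies K\H{o}nig's theorem to conclude that $G_0$ has a matching of size $|W|$, which feeds into the final inequality $|W| + \mu(G - V_0) \le \mu(G)$. Your $W = V(G)\setminus L$ can be much larger than a minimum vertex cover (for instance, on a star $K_{1,t}$ your $W$ has size $1$, which is fine, but on a long path your $W$ contains all internal vertices), so while it proves the lemma exactly as stated, it would not suffice for the subsequent application without further work.
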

\begin{proof}
Let $W'$ be a minimum vertex cover of $G$.
By minimality of $W'$, for every vertex $v$ of $W'$, there exists an edge $e_v$ whose only endpoint in $W'$ is $v$. 
Then, construct $W$ from $W'$ by exchanging every vertex of $W'$ of degree $1$ in $G$ with its neighbor. 
Since $G$ is connected and has at least $3$ vertices, the set $W$ does not contain a vertex of degree $1$ and is still a vertex cover. 
The conclusion follows.
\end{proof}

\begin{lemma}\label{cl:agreeV0}
    Let $W$ be a vertex cover of $G_0$ containing no vertices of degree $1$. Then, we can recolor $\alpha$ and $\beta$ into two colorings $\alpha^*$ and $\beta^*$ that agree on $V_0$ in at most $|W| + |V_0|$ steps. 
\end{lemma}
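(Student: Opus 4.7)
The plan is to perform one recoloring step per vertex of $V_0 \setminus W$ and at most two recoloring steps per vertex of $W$, for a total of $|V_0 \setminus W| + 2|W| = |V_0| + |W|$ steps. For each $v \in V_0$ we choose a common target color $f(v) \in L(v)$ with the aim $\alpha^*(v) = \beta^*(v) = f(v)$, and recolor $\alpha$ and $\beta$ on $V_0$ to reach $f$. We set $f(v) := c$ for every $v \in V_0 \setminus W$; this is consistent because $V_0 \setminus W$ is $G_0$-independent (as $W$ is a vertex cover of $G_0$) and any $G$-neighbor $u \notin V_0$ of such a $v$ has $\alpha(u), \beta(u) \neq c$ (otherwise $u$ would be a $G_c$-neighbor of $v$ and thus belong to $V_0$). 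For each $w \in W$ we pick $f(w) \in L(w)$ avoiding a forbidden set of size at most $1 + 2d(w) - d_{G_0}(w)$ (encompassing $c$, the original $\alpha$- and $\beta$-colors of non-$V_0$ neighbors, and the target colors $f(w')$ of already-assigned $W$-neighbors); together with $|L(w)| \geq d(w) + 2$ and the hypothesis $d_{G_0}(w) \geq 2$, an admissible $f(w)$ exists except possibly when $d(w) = 3$ and $d_{G_0}(w) = 2$.

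We then execute in two phases. In Phase 1, for each $w \in W$ processed in an order handling $G_0[W]$-neighbors consistently (using, e.g., the $A$/$B$-bipartition of $G_0[W]$), if $w \in A$ we recolor $\alpha(w): c \to f(w)$ and then $\beta(w): \beta(w) \to f(w)$, omitting the second step when $f(w) = \beta(w)$; symmetrically if $w \in B$. The defining constraints on $f(w)$ ensure validity at each step, against already-processed $W$-neighbors (whose final color $f(w') \neq f(w)$) and against non-$V_0$ neighbors (whose colors never change). In Phase 2, for each $v \in V_0 \setminus W$ a single step ($\beta(v) \to c$ if $v \in A$, or $\alpha(v) \to c$ if $v \in B$) suffices: every $G_0$-neighbor of $v$ lies in $W$ (because $v \notin W$ and $W$ covers $G_0$) and now has color $f(w) \neq c$, while no non-$V_0$ neighbor ever carried $c$ in the relevant coloring.

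The hard part is the degenerate subcase where no valid $f(w)$ exists. For such a $w$ (say $w \in A$) we spend both of its two allotted steps on $\beta(w)$: route $\beta(w)$ through an intermediate color in $L(w) \setminus \beta(N(w))$ down to $c$, so that $\alpha^*(w) = \beta^*(w) = c$ while $\alpha(w)$ is untouched. The two $G_0$-neighbors $u \in B$ of $w$ then adopt the alternative target $f(u) := \alpha(u)$, reached in one step by $\beta(u): c \to \alpha(u)$; performing $w$'s intermediate $\beta$-step before these neighbor recolorings avoids any conflict of the form $\alpha(u) = \beta(w)$. Any extra forbidden color propagated to adjacent $W$-vertices is absorbed by the remaining counting slack.
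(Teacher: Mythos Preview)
Your overall two-phase strategy and the count $2|W| + |V_0 \setminus W| = |V_0| + |W|$ match the paper's. The gap is in your degenerate subcase. For such a $w \in A$ you propose to finish with $\alpha^*(w) = \beta^*(w) = c$ and to handle each $G_0$-neighbor $u \in B$ of $w$ by the single step $\beta(u): c \to \alpha(u)$. But this move need not be legal: by Lemma~\ref{lem:cc1} the vertex $u$ is not isolated in $G_{\alpha(u)}$, so some $G$-neighbor $y$ of $u$ satisfies $\beta(y) = \alpha(u)$. You only address the possibility $y = w$ (via the intermediate $\beta$-step on $w$), yet $y$ may perfectly well be a vertex outside $V_0$ whose $\beta$-color is never touched; then $\alpha(u) \in \beta'(N(u))$ throughout and the step is blocked. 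Your closing appeal to ``remaining counting slack'' does not help, since no spare step is available to move such a $y$, and the knock-on effects on other $W$-vertices are left entirely unanalyzed.

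The paper avoids pre-committing to target colors $f(w)$. Instead it processes the vertices of $W$ one at a time against the \emph{current} colorings $\alpha', \beta'$: if $v$ has a free color, use it in both (two steps); otherwise, from $d_G(v) \le 3$, $d_{G_0}(v) \ge 2$ and the absence of a free color it argues that $v$ has outdegree exactly $1$ in the current color-shift digraph, say $v \to u$ with $\alpha'(u) = \beta'(v)$. One then recolors $u$ in $\alpha'$ to any color in $L(u) \setminus \alpha'(N(u))$ and afterwards recolors $v$ in $\alpha'$ to $\beta'(v)$ --- two steps, both in the same coloring. The point is that the single blocker $u$ can always be pushed aside (since $|L(u)| \ge d(u) + 2$), so no exceptional case remains; this ``shift through the unique out-neighbor'' maneuver is exactly what your degenerate-case patch is missing.
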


\begin{proof}
The proof will be in two steps.
First, we recolor the vertices of $W$ to make the colorings agree on $W$ using colors distinct from $c$. 
Then, we recolor each vertex in $V_0 \setminus W$ with $c$ in $\alpha$ and $\beta$ in a single step since $W$ covers $G_0$. 

Denote by $\alpha', \beta'$ the current colorings and say we recolor a vertex $v$ of $W$.
Recall that $c$ is the color shared by all vertices of $V_0$.
We transform $\alpha'$ and $\beta'$ such that $c$ is not a color of $v$ for both resulting colorings and the resulting colorings agree on $v$.

If $v$ has a free color $c^*$ in the colorings $\alpha'$ and $\beta'$, then we recolor $v$ in both $\alpha'$ and $\beta'$ with $c^*$ in at most two steps.
Otherwise, we can assume that $v$ has no free color in $\alpha', \beta'$.
Hence, $\alpha'(N(v)) \cup \beta'(N(v))$ contains at least $d_G(v) + 2$ distinct colors, including $\alpha'(v)$ and $\beta'(v)$.
Since $2d_G(v)$ colors appear in the neighborhood of $v$ for $\alpha'$ and $\beta'$, the colors $\alpha'(v)$ and $\beta'(v)$ appear at most $d_G(v) \le 3$ times in $N(v)$ for $\beta'$ and $\alpha'$ in total.
Since $v \in G_c$, we can assume by symmetry that $\alpha'(v) = c$.
By construction of $W$, $v$ has degree at least $2$ in $G_0 = G_c[V_0]$, so $\alpha'(v)$ appears at least twice in $N(v)$ for $\beta'$.
And since $\beta(v)'$ is not free for $v$ in $\alpha', \beta'$, the color $\beta'(v)$ appears at least once in $N(v)$ for $\alpha'$.
Hence, the degree of $v$ in $\overrightarrow{D_{\alpha', \beta'}}$ is exactly $3$: it has indegree $2$ and outdegree $1$.
Let $vu$ be the outgoing arc of $v$ in $\overrightarrow{D_{\alpha', \beta'}}$.
We transform $\alpha'$ in at most two steps: first recolor $u$ from $\beta'(v)$ with any color in $L(u) \setminus \alpha'(N[u])$ then recolor $v$ in $\alpha'$ from $c$ to $\beta'(v)$. 

In both cases, we recolor at most twice the vertices of $W$. 
The resulting colorings agree on $W$ and no vertex of $W$ is colored $c$.
Moreover, the previous color and new color of $u$ are not $c$, so the vertices of $V_0 \setminus W$ are still colored $c$ in one of the colorings and there is no new vertex colored $c$.
Hence, since $W$ covers $G_0$, the vertices of $V_0 \setminus W$ have no conflict over $c$.
We can recolor every vertex of $V_0 \setminus W$ in a single step such that its color is $c$ in both resulting colorings $\alpha^*$ and $\beta^*$. 
See Figure~\ref{fig:cc3} for an illustration of the recoloring sequence.
In total, we use at most $2|W| + |V_0 \setminus W| = |W| + |V_0|$ steps to obtain $\alpha^*, \beta^*$ from $\alpha, \beta$.
\end{proof}

\begin{figure}[ht]
        \centering
        \tikzstyle{v1}=[circle, minimum size=3pt, scale=0.6, fill]
        \tikzstyle{v2}=[circle,draw, minimum size=12pt, line width=2]
        \tikzstyle{fleche}=[arrows = {-Latex[width=4pt, length=4pt]}]
        \tikzstyle{labell}=[text opacity=1, scale =1]
        \tikzstyle{bigarrow}=[>= {Triangle[scale=0.5]}]
        \tikzstyle{garrow}=[single arrow, draw, 
      minimum width =2mm, single arrow head extend=3pt, inner sep=1mm,
      minimum height=8mm, fill=\colora, \colora]
        \begin{tikzpicture}[use Hobby shortcut,scale=1,transform shape]

        \nodc(a1)(1.5,0)[\colorg, \colord];
        \nodc(a2)(1.5,0.7)[\colorg, \colorb];
        \nodc(a3)(1.5,-0.7)[\colorg, \colorc];
        \nodc(b1)(0,0.35)[\colord, \colorg];
        \nodc(b2)(0,-0.35)[\colorf, \colorg];
        \nodc(b3)(0,1.05)[\colorf, \colorg];
        \nodc(b4)(0,-1.05)[\colorb, \colorg];
        
        \draw[fleche] (b1) to (a1);
        \draw[fleche] (b2) to (a1);
        \draw[fleche] (b3) to (a1);
        \draw[fleche] (b1) to (a2);
        \draw[fleche] (b2) to (a3);
        \draw[fleche] (b3) to (a3);
        \draw[fleche] (b4) to (a3);
        \draw[fleche] (a1) to (b1);

        \node (h0) at (0.75, 1.55) [labell]{$V_0$};

        \node (h1) at (0.8,0) [labell]{};
        \node (h2) at (2.15,-0.3) [labell]{$W$};

        \begin{pgfonlayer}{background}
            \fill[\colora, opacity=0.4] \convexpath{a1,a3,h1,b1}{10pt};
        \end{pgfonlayer}

        \draw[dashed, \colora] (3, -1) to  (3, 1);
        \node (f1) at (3,0) [garrow]{};

        \tikzset{xshift=4.5cm}     

        \nodc(a1)(1.5,0)[\colore, \colore];
        \nodc(a2)(1.5,0.7)[\colorg, \colorb];
        \nodc(a3)(1.5,-0.7)[\colorc, \colorc];
        \nodc(b1)(0,0.35)[\colord, \colord];
        \nodc(b2)(0,-0.35)[\colorf, \colorg];
        \nodc(b3)(0,1.05)[\colorf, \colorg];
        \nodc(b4)(0,-1.05)[\colorb, \colorg];

        \node (h0) at (0.75, 1.55) [labell]{$V_0$};

        \node (h1) at (0.8,0) [labell]{};
        
        \node (h2) at (2.15,-0.3) [labell]{$W$};
        
        \begin{pgfonlayer}{background}
            \fill[\colora, opacity=0.4] \convexpath{a1,a3,h1,b1}{10pt};
        \end{pgfonlayer}

        \draw[dashed, \colora] (3, -1) to  (3, 1);
        \node (f1) at (3,0) [garrow]{};

        \tikzset{xshift=4.5cm}

        \nodc(a1)(1.5,0)[\colore, \colore];
        \nodc(a2)(1.5,0.7)[\colorg, \colorg];
        \nodc(a3)(1.5,-0.7)[\colorc, \colorc];
        \nodc(b1)(0,0.35)[\colord, \colord];
        \nodc(b2)(0,-0.35)[\colorg, \colorg];
        \nodc(b3)(0,1.05)[\colorg, \colorg];
        \nodc(b4)(0,-1.05)[\colorg, \colorg];

        \node (h0) at (0.75, 1.55) [labell]{$V_0$};
        
        \node (h1) at (0.8,0) [labell]{};
        
        \node (h2) at (2.15,-0.3) [labell]{$W$};

        \begin{pgfonlayer}{background}
            \fill[\colora, opacity=0.4] \convexpath{a1,a3,h1,b1}{10pt};
        \end{pgfonlayer}

        \end{tikzpicture}
        \caption{Reconfiguration sequence of Lemma~\ref{cl:agreeV0}. First, we make the colorings agree on $W$ (in gray here) on colors distinct from $c$ (violet here), then we recolor the vertices of $V_0 \setminus W$ with $c$. We only represent arcs of the color-shift digraph that are between vertices of $V_0$. } 
        \label{fig:cc3}
\end{figure}
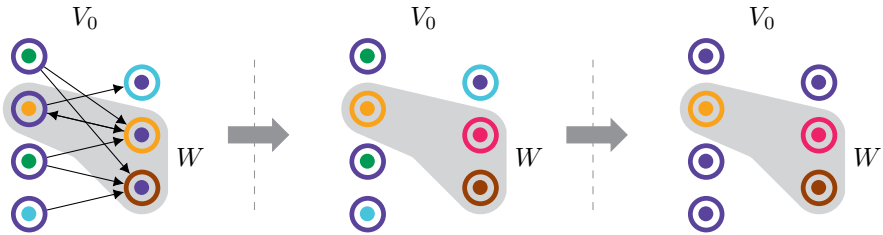 

Now, we conclude the proof of Theorem~\ref{thm:subcubic}.
Let $W$ be a vertex cover of $G_0$ obtained from Lemma~\ref{cl:vcex}
Since $W$ is a minimum vertex cover of the bipartite graph $G_0$, K\H{o}nig's theorem ensures that $G_0$ has a matching of size $|W|$. So $|W| +  \mu(G - V_0) \le \mu(G)$.
By Lemma~\ref{cl:agreeV0}, we construct two colorings $\alpha^*$ and $\beta^*$ from $\alpha$ and $\beta$ that agree on $V_0$ in at most $|W|+|V_0|$ steps. 
By minimality of $G$ applied to $G-V_0$ with the list-assignment $L'$ obtained by restricting $L$ to $G-V_0$, we obtain the following contradiction:
\[
\begin{aligned}
    \dist_G(\alpha, \beta) &\le |V_0| + |W| + d_{G - V_0}(\alpha^*, \beta^*) \\
                    &\le |V_0| + |W| + n(G - V_0) + \mu(G - V_0) \\
                    &= n(G) + |W| +  \mu(G - V_0) \\
                    &\le n(G) + \mu(G).
\end{aligned}
\]

\section{Complete multipartite graphs}\label{sec:rpart}

\rpartite*

We prove Theorem~\ref{thm:rpartite} by contradiction.
Let $G$ be a minimal counter-example and let $I_1, \ldots, I_m$ be the maximal independent sets of $G$ ordered in decreasing size.
Consider a list-assignment $L$ of $G$ and two $L$-colorings $\alpha$, $\beta$ such that $ \dist_G(\alpha, \beta) > n(G) + \mu(G)$.

\smallskip

We distinguish two cases.
In the first case, we assume that $|I_1| \ge \sum_{j>1}|I_j|$.
Then we adapt the proof technique of Theorem~20 of~\cite{cambie2024optimally} which proves Conjecture~\ref{conj} for complete bipartite graphs. 
In the second case, since no part is too large, $G$ has an almost perfect matching, which allows us to easily obtain a graph with a smaller maximum matching when removing vertices. By upper bounding the number of colors, we reach a contradiction on the colors of $I_3$.
Before giving the proof, let us start with simple useful observations:

\begin{lemma}\label{lem:aeqb}
    Every vertex has conflicts in both colorings $\alpha$ and $\beta$. 
\end{lemma}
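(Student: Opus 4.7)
The plan is to argue by contradiction using the minimality of $G$, in essentially the same spirit as Lemma~\ref{lem:cc1}. Suppose some vertex $v$ fails the conclusion; by the symmetry between $\alpha$ and $\beta$, I may assume $\alpha(v)\notin \beta(N(v))$, i.e.\ no neighbor of $v$ is colored $\alpha(v)$ by $\beta$, so $v$ has no outgoing arc in $\overrightarrow{D_{\alpha,\beta}}$. The goal is to reduce to a smaller complete multipartite instance where the conjectured bound holds, paying only the one extra recoloring step produced by the reduction.

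First, I would construct a new $L$-coloring $\beta'$ of $G$ from $\beta$ in a single step: since $\alpha(v)\in L(v)$ (as $\alpha$ is a proper $L$-coloring) and $\alpha(v)\notin \beta(N(v))$, I can change the color of $v$ in $\beta$ from $\beta(v)$ to $\alpha(v)$, obtaining a proper $L$-coloring $\beta'$ with $\beta'(v)=\alpha(v)$. In particular, $\alpha$ and $\beta'$ agree on $v$.

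Next, I set $G' := G - v$ and let $L'$ be the restriction of $L$ to $G'$. Since $G$ is complete multipartite, so is $G'$. The standard check (as in the minimal counter-example scheme described in Section~\ref{sec:prem}) shows $|L'(u)|\ge d_{G'}(u)+2$ for every vertex $u$ of $G'$: removing the common color $\alpha(v)=\beta'(v)$ from the list of each neighbor of $v$ costs at most one color, which is balanced by the loss of the edge to $v$. Then, by the minimality of $G$, one has $\dist_{G'}(\alpha|_{G'},\beta'|_{G'})\le n(G')+\mu(G')\le (n(G)-1)+\mu(G)$. Combining with the single recoloring step from $\beta$ to $\beta'$ yields
\[
\dist_G(\alpha,\beta)\le 1+\dist_{G'}(\alpha|_{G'},\beta'|_{G'})\le n(G)+\mu(G),
\]
contradicting the assumption on $\dist_G(\alpha,\beta)$.

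I do not expect a real obstacle here. The only structural input about $G$ is that complete multipartite graphs are closed under vertex deletion, which is immediate; the rest is a direct transcription of the argument of Lemma~\ref{lem:cc1}. The symmetric case where some vertex $v$ satisfies $\beta(v)\notin\alpha(N(v))$ is handled verbatim by swapping the roles of $\alpha$ and $\beta$, so the lemma follows.
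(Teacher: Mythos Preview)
Your proof is correct and follows essentially the same approach as the paper. The only cosmetic difference is the side of the symmetry chosen: the paper assumes $v$ has no conflict over $\beta(v)$ and recolors $v$ in $\alpha$ to $\beta(v)$, whereas you assume $\alpha(v)\notin\beta(N(v))$ and recolor $v$ in $\beta$ to $\alpha(v)$; both yield the same one-step reduction to $G-v$ and the identical minimality computation $\dist_G(\alpha,\beta)\le 1+n(G-v)+\mu(G-v)\le n(G)+\mu(G)$.
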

\begin{proof}
By symmetry, assume a vertex $v$ has no conflict over $\beta(v)$.
Then, in $\alpha$, we can recolor $v$ with $\beta(v)$. 
By minimality of $G$, we have the following contradiction :
\[\dist_G(\alpha, \beta) \le n(G - v) + \mu(G - v) + 1 \le n(G) + \mu(G).\qedhere\]
\end{proof}

Thus, for every set $I_i$ of $G$, we deduce that $\beta(I_i) \subseteq \alpha(G - I_i)$ and $\alpha(I_i) \subseteq \beta(G - I_i)$.
In particular, $\alpha(G) = \beta(G)$.
Also, since the independent sets are complete to each other, matchings in $G$ will have the following property:

\begin{lemma}\label{cl:mudecrease}
    Let $V_i \subseteq I_i$ and $V_j \subseteq I_j$ with $i \neq j$.
    \[ \mu(G - V_i - V_j) \leq \mu(G) - \min(|V_i|, |V_j|) \]
\end{lemma}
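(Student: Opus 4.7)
The plan is to exploit the fact that, in a complete multipartite graph, every vertex of $I_i$ is adjacent to every vertex of $I_j$ whenever $i \neq j$. This gives an abundance of edges between $V_i$ and $V_j$, which we can use to build a matching in $G$ that is strictly larger than any maximum matching of $G - V_i - V_j$.

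More precisely, I would start from a maximum matching $M'$ of $G - V_i - V_j$, which by definition has size $\mu(G - V_i - V_j)$ and uses no vertex of $V_i \cup V_j$. Set $k := \min(|V_i|, |V_j|)$ and pick arbitrary subsets $V_i' \subseteq V_i$ and $V_j' \subseteq V_j$ of size $k$ each. Since $i \neq j$, the bipartite graph between $V_i'$ and $V_j'$ induced by $G$ is complete, so it admits a perfect matching $M''$ of size $k$.

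The edges of $M''$ use only vertices of $V_i \cup V_j$, while the edges of $M'$ use only vertices outside $V_i \cup V_j$; hence $M' \cup M''$ is a matching of $G$ of size $\mu(G - V_i - V_j) + k$. Therefore
\[
\mu(G) \;\geq\; \mu(G - V_i - V_j) + \min(|V_i|, |V_j|),
\]
which rearranges to the desired inequality. I do not expect any genuine obstacle here: the statement is essentially a structural consequence of completeness between parts, and the only thing to be careful about is keeping the two matchings vertex-disjoint, which is automatic from how they are chosen.
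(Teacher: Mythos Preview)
Your argument is correct and is essentially identical to the paper's own proof: start from a maximum matching of $G - V_i - V_j$, add a matching of size $\min(|V_i|,|V_j|)$ between $V_i$ and $V_j$ (available since $G$ is complete multipartite), and conclude.
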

\begin{proof}
    Consider a maximum matching $M$ of $G - V_i - V_j$. 
    Adding to $M$ a matching of size $\min(|V_i|, |V_j|)$ between $V_i$ and $V_j$ to $M$ gives a matching of $G$. 
    Thus, $\mu(G - V_i - V_j) + \min(|V_i|, |V_j|) \leq \mu(G)$, which gives the result. 
\end{proof}

\subsection{Large independent set}

Assume that $|I_1| \ge \sum_{j>1}|I_j|$.
Thus, every maximum matching of $G$ covers all the vertices of $V(G) - I_1$ and has size $|V(G) - I_1|$.
Hence, for every vertex set $U' \subseteq V(G) - I_1$, $\mu(G - U') = \mu(G) - |U'|$ since the graph is a complete multipartite graph.

We first prove that there is no conflict in $G$ between two small independent sets, and thus every conflict of $G$ involves a vertex of $I_1$. Then, we will follow the proof technique of Conjecture~\ref{conj} for complete bipartite graph given in~\cite{cambie2024optimally}.

\begin{lemma}\label{lem:conflict}
    Let $v$ be a vertex of $I_j$ with $j \ge 2$. 
    Then, $v$ is only in conflict with vertices of $I_1$.
\end{lemma}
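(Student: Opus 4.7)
The argument proceeds by contradiction: suppose $v \in I_j$ ($j \ge 2$) is in conflict with some $u \in I_k$ where $k \ne 1$ (and necessarily $k \ne j$). After swapping $\alpha$ and $\beta$ if necessary, we may assume $\beta(v) = \alpha(u) = c$, so $v \to u$ is an arc of $\overrightarrow{D_{\alpha, \beta}}$. The plan is to build colorings $\gamma, \zeta$ that agree on both $v$ and $u$ using at most $4$ recoloring steps. Since $v, u \in V(G) \setminus I_1$, the property stated above gives $\mu(G - \{v, u\}) = \mu(G) - 2$, so applying minimality to $G - \{v, u\}$ yields $\dist_G(\alpha, \beta) \le 4 + n(G) + \mu(G) - 4 = n(G) + \mu(G)$, contradicting the choice of $G$.

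The central preliminary observation is that no vertex $x$ outside $I_1$ can have a free color. Indeed, if some $x \in I_i$ with $i \ge 2$ had a free color $c^*$, we could recolor $x$ to $c^*$ in both $\alpha$ and $\beta$ in $2$ steps, obtaining colorings that agree on $x$; applying minimality to $G - x$ (for which $\mu(G - x) = \mu(G) - 1$) would already give $\dist_G(\alpha, \beta) \le 2 + (n(G) - 1) + (\mu(G) - 1) = n(G) + \mu(G)$, a contradiction. In particular $L(v) \subseteq \alpha(N(v)) \cup \beta(N(v))$ and similarly for $u$. A second useful remark is that in a proper coloring of a complete multipartite graph each color class lies in a single part, so color $c$ appears in $\alpha$ only within $I_k$ and in $\beta$ only within $I_j$.

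The four-step sequence goes as follows. (i) In $\alpha$, recolor $u$ from $c$ to a carefully chosen $c_u \in L(u) \setminus \alpha(N(u))$, which exists since $|L(u)| \ge d(u) + 2$. (ii) In $\alpha$, recolor $v$ from $\alpha(v)$ to $c$; this is valid provided $u$ is the unique $\alpha$-$c$-colored vertex of $I_k$, ensuring that after (i) no neighbor of $v$ in $\alpha$ still carries $c$. (iii) In $\beta$, if $c_u \in \beta(N(u))$, recolor the unique $\beta$-$c_u$-colored neighbor of $u$ to another color. (iv) In $\beta$, recolor $u$ from $\beta(u)$ to $c_u$. The resulting $\gamma, \zeta$ satisfy $\gamma(v) = \zeta(v) = c$ and $\gamma(u) = \zeta(u) = c_u$, so they agree on $\{v, u\}$.

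The main obstacle is to guarantee the feasibility of this construction in all configurations. The two delicate points are the uniqueness assumption in (ii) and confining the clearing in (iii) to a single step. When several vertices of $I_k$ share the $\alpha$-color $c$, the recoloring scope is widened to include all of them simultaneously, which is absorbable because each additional non-$I_1$ vertex lowers $\mu$ additively by Lemma~\ref{cl:mudecrease}, keeping the step-budget balanced. The no-free-color property of $u$ forces every candidate $c_u \in L(u) \setminus \alpha(N(u))$ to lie in $\beta(N(u))$, and we choose $c_u$ among these so that its $\beta$-preimage intersects $N(u)$ in a single vertex, exploiting the segregation of color classes into single parts of the complete multipartite graph.
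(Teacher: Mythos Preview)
Your preliminary observation that no vertex outside $I_1$ has a free color is correct, and the four-step sequence works when $|\alpha^{-1}(c)|=1$ and the chosen $c_u$ has a single $\beta$-preimage in $N(u)$. The gap is that your last paragraph does not actually close the other configurations.

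For step~(ii) when $p:=|\alpha^{-1}(c)|>1$: if you ``widen the scope'' by recoloring all of $\alpha^{-1}(c)$ in $\alpha$, you spend $p$ steps there, but after step~(iv) you still only \emph{agree} on the two vertices $v$ and $u$---the other $p-1$ vertices of $\alpha^{-1}(c)\setminus\{u\}$ now carry arbitrary colors in $\gamma$ that need not match $\zeta$. You cannot delete vertices on which $\gamma$ and $\zeta$ disagree, so only $\{v,u\}$ is removed; $n$ and $\mu$ each drop by $2$, while you have spent at least $p+3>4$ steps. The appeal to Lemma~\ref{cl:mudecrease} would only help if those extra vertices were also being removed, and they are not.

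For step~(iii): the assertion that some admissible $c_u$ has $|\beta^{-1}(c_u)\cap N(u)|=1$ is unsupported. ``Segregation into single parts'' tells you only that $\beta^{-1}(c_u)$ lies in one $I_\ell$; that part may contain many $c_u$-colored vertices. Since $L(u)\setminus(\alpha(N(u))\cup\{c\})$ may well consist of a single color, there is no room for a pigeonhole argument either.

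The paper sidesteps both issues by working with the whole color class at once. With $A=\alpha^{-1}(c)\subseteq I_a$ and $B=\beta^{-1}(c)\subseteq I_b$ (necessarily $a,b\ge 2$) and, say, $|A|\le |B|$, one recolors every vertex of $A$ off $c$ in $\alpha$ and then every vertex of $B$ onto $c$, using $|A|+|B|$ steps in total. The resulting coloring agrees with $\beta$ on all of $B$; removing $B\subseteq V(G)\setminus I_1$ drops $\mu$ by $|B|$, and $|A|+|B|+(n-|B|)+(\mu-|B|)\le n+\mu$ finishes directly. No auxiliary color $c_u$ is needed.
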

\begin{proof}
    Assume by contradiction that $v$ has a conflict with a vertex that is not in $I_1$ over the color $c$. 
    Let $A$ be the set of vertices colored $c$ in $\alpha$ and $B$ the set of vertices colored $c$ in $\beta$. 
    Since $G$ is a complete multipartite graph, $A$ (resp. $B$) is included in an $I_a$ (resp. $I_b$) for some $a,b>1$. 
    By symmetry, we can assume that $|A| \le |B|$.
    Let $\gamma$ be the coloring obtained from $\alpha$ as follows.
    Recolor the vertices $u \in A$ with another color, which is possible since $|L(u)| \ge d(u) + 2 > |\alpha(N(u)) \cup \{c\}|$.
    Then, recolor the vertices in $B$ with $c$ so that $\gamma$ and $\beta$ agree on $B$.
    Let  $B$ be the list-assignment obtained from restricting $L$ to $G-B$ (i.e. delete $c$ from the lists of the neighbors of $B$). 
    By minimality of $G$, the distance from $\gamma$ to $\beta$ in $G - B$ is at most $n(G - B) + \mu(G - B) = n(G) + \mu(G) - 2|B|$. 
    Thus, we have the following contradiction: 
    \[ \dist_G(\alpha, \beta) \le n(G) + \mu(G) - 2|B| + |A|+|B| \leq  n(G) + \mu(G).  \qedhere\]
\end{proof}

By Lemma~\ref{lem:conflict}, we have that $\beta(G - I_1) \subseteq \alpha(I_1)$ and $\alpha(G - I_1) \subseteq \beta(I_1)$.
Since we also have that $\alpha(I_1) \subseteq \beta(G - I_1)$ and $\beta(I_1) \subseteq \alpha(G - I_1)$ by Lemma~\ref{lem:aeqb}, we deduce that $\beta(G - I_1) =\alpha(I_1)$ and $\alpha(G - I_1) = \beta(I_1)$.
We distinguish two cases.

Assume first that a color $c$ appears only once in $I_1$ for $\alpha$, say on a vertex $w$.
Let $\gamma$ be the coloring obtained from $\alpha$ as follows: recolor $w$ with a color distinct from $\alpha(G - I_1) \cup \{c\}$ and recolor every $u \in \beta^{-1}(c)$ with $c$.
Let $G'$ be the graph obtained from $G$ by removing $\beta^{-1}(c) \subseteq G - I_1$ and $L'$ the list-assignment obtained by restricting $L$ to $G'$.
By minimality, we can recolor $G'$ from $\gamma$ to $\beta$ in at most $n(G')+\mu(G')$ steps.
Since $|\beta^{-1}(c)|\ge 1$ by Lemma~\ref{lem:aeqb}, we have a transformation from $\alpha$ to $\beta$ of size at most: \[ 1 + |\beta^{-1}(c)| + n(G') + \mu(G') = 1 + |\beta^{-1}(c)| + (n(G) - |\beta^{-1}(c)|) + (\mu(G) - |\beta^{-1}(c)|) \le n(G) + \mu(G) \]
which gives a contradiction. 

Thus, we can assume that every color of $\alpha(I_1)$ appears at least twice in $I_1$. Thus, $|I_1| \ge 2|\alpha(I_1)|$.
By symmetry, we also have that $|I_1| \ge 2|\beta(I_1)|$.
Hence, $|I_1| \ge |\alpha(I_1)| + |\beta(I_1)|$.
For each $u \in I_i$ (with $2 \le i \le r$), we have that: \[ |L(u)| \ge \sum_{j \neq i}|I_j| + 2 \ge  |\alpha(I_1)| + |\beta(I_1)|  + \sum_{j \notin\{1,i\}}|I_j| + 2 \]
Hence we can obtain $\beta$ from $\alpha$ as follows. 
Starting from $\gamma=\alpha$, we successively recolor each $u \in I_i \subseteq G - I_1$ with a color in $L(u) \setminus (\gamma(G-I_i) \cup \beta(I_1))$.
Such a recoloring is possible since $\gamma(I_1)=\alpha(I_1)=\beta(G - I_1)$ at each step. 
Moreover this recoloring creates no conflict on $G-I_1$.
Then, recolor each $w \in I_1$ with $\beta(w)$. Finally, recolor each $u$ in $G - I_1$ with $\beta(u)$. 
The transformation needs at most $2|G - I_1| + |I_1| = n(G) + \mu(G)$ steps, a contradiction.

\subsection{Almost perfect matching}

From the previous section, we can assume $|I_i| < |V(G) \setminus I_i|$ for every $1 \le i \le r$.
Hence, $G$ has at least three independent sets, that is $m\ge 3$. 
The graph $G$ also satisfies the following property.

\begin{lemma}~\label{lem:apm}
    The graph $G$ has an almost-perfect matching.
\end{lemma}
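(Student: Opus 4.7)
The plan is to exploit the hypothesis of this subsection, namely that $|I_i| < |V(G) \setminus I_i|$ for every $1 \le i \le m$. Rewriting this as $|I_i| < n(G)/2$, it follows in particular that $|I_i| \le \lfloor n(G)/2 \rfloor$ for every part of $G$. Under this uniform bound on the size of every part, my strategy is to construct, directly and explicitly, a matching of size $\lfloor n(G)/2 \rfloor$, which is exactly what ``almost perfect'' means (the matching covers every vertex if $n(G)$ is even and all but one vertex if $n(G)$ is odd).

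The construction is a classical trick for complete multipartite graphs. Set $h = \lfloor n(G)/2 \rfloor$ and enumerate the vertices of $G$ as $v_1, v_2, \ldots, v_{n(G)}$ in an order where the vertices of each part $I_j$ appear consecutively. For each $1 \le i \le h$ we have $i + h \le n(G)$, so the pair $\{v_i, v_{i+h}\}$ is well defined, and I propose to take $M = \{\, v_i v_{i+h} : 1 \le i \le h \,\}$ as the desired matching.

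The only thing to verify is that every pair in $M$ is actually an edge, i.e.\ that $v_i$ and $v_{i+h}$ belong to different parts: the $h$ pairs are obviously pairwise vertex-disjoint by construction. If $v_i$ and $v_{i+h}$ belonged to the same part $I_k$, then by the consecutivity of $I_k$ in the ordering, the whole block $v_i, v_{i+1}, \ldots, v_{i+h}$ would lie inside $I_k$, giving $|I_k| \ge h + 1 > \lfloor n(G)/2 \rfloor$, which contradicts the hypothesis recalled in the first paragraph. Hence $M$ is a matching of size $h = \lfloor n(G)/2 \rfloor$, which is almost perfect.

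I do not expect any real obstacle: the argument is essentially a one-line construction, and the only mild subtlety, the parity of $n(G)$, is handled uniformly by the floor function.
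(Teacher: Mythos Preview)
Your proof is correct, but it takes a different route from the paper's. The paper argues indirectly: it fixes a maximum matching $M$, observes that for each part $I_i$ there is an edge of $M$ with both endpoints outside $I_i$ (otherwise $|I_i|\ge |V(G)\setminus I_i|$), and then shows that two unmatched vertices would let one augment $M$ by swapping that edge for two new ones---contradicting maximality. Your approach, by contrast, is fully constructive: listing the vertices part by part and pairing $v_i$ with $v_{i+\lfloor n/2\rfloor}$ yields the matching explicitly, with the size bound on each part guaranteeing that every such pair crosses parts. Your argument is slightly more elementary in that it avoids any appeal to maximality or augmentation, and it even exhibits the matching; the paper's augmenting argument is closer in spirit to standard matching proofs and perhaps generalises more readily to settings where an explicit ordering is less natural.
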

\begin{proof}
Let $M$ be a maximum matching of $G$. Observe that for every $i$, $M$ contains an edge $u_iv_i$ without endpoints in $I_i$ (otherwise, $|I_i| \ge |V(G) \setminus I_i|$).
Suppose that there are two vertices $u,v$ that are not incident to edges of $M$.
By maximality of $M$, $u,v$ cannot be adjacent, so they lie in some $I_i$. 
But then, we can replace $u_iv_i$ by $uu_i$ and $vv_i$ in $M$.
Hence, for every pair of vertices, at least one of them is the endpoint of an edge of $M$.
So $M$ is an almost perfect matching.    
\end{proof}

To prove Theorem~\ref{thm:rpartite}, we will first bound by $\frac 23 n(G)$ the total number of distinct colors in $\alpha(G)$ (and thus in $\beta(G)$ by Lemma~\ref{lem:aeqb}). 
Then, we use this bound to show that every vertex of $I_3$ has a free color. 
These free colors allow us to obtain colorings that agree on $I_3$ and its neighborhood in the color-shift digraph in a few steps, which brings a contradiction. 

If every color appears at least twice on $G$ in $\alpha$, then we would have our desired bound on the number of colors in $\alpha(G)$ (and even better). 
This actually may not be true, but we will control the number of colors appearing only once in a coloring. 
Let $A := \{ w \in V(G), \ |\alpha^{-1}(\alpha(w))| = 1 \}$ and $B := \{ w \in V(G), \ |\beta^{-1}(\beta(w))| = 1 \}$ be the sets of vertices whose color appears only once in $\alpha$ and $\beta$ respectively. 
Our first objective is to prove that every vertex in $A\cup B$ has a unique \emph{cousin}, that is another vertex in $A \cup B$ having its unique color in the other coloring. We start by proving that a color appears only once for $\alpha$ if and only if it also appears only once for $\beta$:

\begin{lemma}\label{prop:matchingab}
    If $w \in A$, then $\alpha(w)$ appears only once in $\beta$. Symmetrically, if $u \in B$, then $\beta(u)$ appears only once in $\alpha$.
\end{lemma}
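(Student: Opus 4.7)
The plan is to argue by contradiction. Suppose $w \in A$ with $\alpha(w) = c$, but $|\beta^{-1}(c)| \ge 2$. Writing $B_c := \beta^{-1}(c)$, the set $B_c$ is monochromatic in $\beta$, hence independent in $G$, hence contained in a single part $I_i$. The goal is to transform $\alpha$ into a coloring $\gamma$ that agrees with $\beta$ on $B_c$ in few steps, then invoke minimality on $G - B_c$.

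First I would modify $\alpha$ in $1 + |B_c|$ steps as follows. Since $|L(w)| \ge d(w) + 2$, pick $c' \in L(w) \setminus (\alpha(N(w)) \cup \{c\})$ and recolor $w$ with $c'$. After this step no vertex of $G$ is colored $c$, because $w \in A$ means $c$ appeared exactly once in $\alpha$. Therefore, for each $u \in B_c$ (all lying in $I_i$, hence pairwise non-adjacent), the color $c$ is not used in $N(u)$, so $u$ may be recolored with $c$; performing this for every $u \in B_c$ in turn produces the desired $\gamma$ that agrees with $\beta$ on $B_c$.

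Next I would apply minimality to $G' := G - B_c$ with the list-assignment $L'$ obtained by restricting $L$. This gives $\dist_{G'}(\gamma|_{G'}, \beta|_{G'}) \le n(G') + \mu(G')$. To close the argument I need $\mu(G') \le \mu(G) - 1$, which is where Lemma~\ref{lem:apm} enters: $G$ admits an almost-perfect matching, so $\mu(G) = \lfloor n(G)/2 \rfloor$, while trivially $\mu(G') \le \lfloor n(G')/2 \rfloor = \lfloor (n(G) - |B_c|)/2\rfloor$. A direct parity check shows that as soon as $|B_c| \ge 2$, this bound is at most $\mu(G) - 1$.

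Combining the pieces,
\[
\dist_G(\alpha, \beta) \le (1 + |B_c|) + n(G') + \mu(G') \le 1 + |B_c| + (n(G) - |B_c|) + (\mu(G) - 1) = n(G) + \mu(G),
\]
contradicting the choice of $\alpha, \beta$. The symmetric statement for $u \in B$ follows by swapping the roles of $\alpha$ and $\beta$. The only real subtlety is the matching estimate: it is crucial that $|B_c| \ge 2$ and that $G$ has an almost-perfect matching, since a weaker bound $\mu(G') \le \mu(G)$ would only give a non-strict inequality and thus no contradiction.
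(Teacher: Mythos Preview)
Your proof is correct and follows essentially the same route as the paper's own argument: assume $|\beta^{-1}(\alpha(w))|\ge 2$, recolor $w$ once and then each vertex of $\beta^{-1}(\alpha(w))$ with $\alpha(w)$, and conclude by minimality together with the almost-perfect matching to drop $\mu$ by one. The only cosmetic difference is that you spell out the matching estimate via $\mu(G)=\lfloor n(G)/2\rfloor$ and a parity check, whereas the paper states $\mu(G')\le\mu(G)-1$ directly.
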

\begin{proof}
    Since $\alpha(G) = \beta(G)$, there is at least one vertex colored with $\alpha(w)$ in $\beta$.
    Assume by contradiction that $B_w:=\beta^{-1}(\alpha(w))$ contains at least two vertices.
    Let $\gamma$ be the coloring obtained from $\alpha$ by first recoloring $w$ with a color in $L(w) \setminus \alpha(N[w])$, which is possible since $|L(w)| \geq d(w) + 2$. 
    And then recoloring every vertex in $B_w$ into $\alpha(w)$ (see Figure~\ref{fig:matchingab}).
    Let $G'$ be the graph obtained from $G$ by removing $B_w$ and $L'$ the list-assignment obtained by restricting $L$ to $G'$.
    By minimality of $G$, we can recolor $G'$ from $\gamma$ to $\beta$ in at most $n(G') + \mu(G')$ steps. 
    Since $B_w$ contains at least two vertices and $G$ has an almost-perfect matching, $\mu(G') \le \mu(G) - 1$.
    Thus, we can recolor $G$ from $\alpha$ to $\beta$ in at most $1 + |B_w| + n(G') + \mu(G') \leq 1 + |B_w| + (n(G) - |B_w|) + (\mu(G) - 1) = n(G) + \mu(G)$ steps, a contradiction. By symmetry, we also have that $|\alpha^{-1}(\beta(u))| = 1$ for every $u \in B$.
\end{proof}

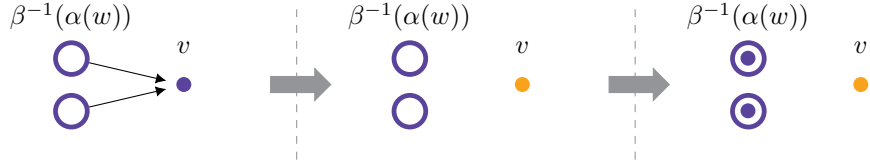
\begin{figure}[ht]
        \centering
        \tikzstyle{v1}=[circle, minimum size=3pt, scale=0.6, fill]
        \tikzstyle{v2}=[circle,draw, minimum size=12pt, line width=2]
        \tikzstyle{fleche}=[arrows = {-Latex[width=4pt, length=4pt]}]
        \tikzstyle{labell}=[text opacity=1, scale =1]
        \tikzstyle{bigarrow}=[>= {Triangle[scale=0.5]}]
        \tikzstyle{garrow}=[single arrow, draw, 
      minimum width =2mm, single arrow head extend=3pt, inner sep=1mm,
      minimum height=8mm, fill=\colora, \colora]
        \begin{tikzpicture}[scale=1]

        \nodc(a1)(1.5,0)[\colorg, none];
        \nodc(a2)(0,0.35)[none, \colorg];
        \nodc(a3)(0,-0.35)[none, \colorg];

        \draw[fleche] (a3) to (a1);
        \draw[fleche] (a2) to (a1);

        \node (h0) at (1.5, 0.5) [labell]{$w$};
        \node (h0) at (0, 0.9) [labell]{$\beta^{-1}(\alpha(w))$};

        \draw[dashed, \colora] (3, -1) to  (3, 1);
        \node (f1) at (3,0) [garrow]{};

        \tikzset{xshift=4.5cm}

        \nodc(a1)(1.5,0)[\colord, none];
        \nodc(a2)(0,0.35)[none, \colorg];
        \nodc(a3)(0,-0.35)[none, \colorg];

        \node (h0) at (1.5, 0.5) [labell]{$w$};
        \node (h0) at (0, 0.9) [labell]{$\beta^{-1}(\alpha(w))$};

        \draw[dashed, \colora] (3, -1) to  (3, 1);
        \node (f1) at (3,0) [garrow]{};

        \tikzset{xshift=4.5cm}

        \nodc(a1)(1.5,0)[\colord, none];
        \nodc(a2)(0,0.35)[\colorg, \colorg];
        \nodc(a3)(0,-0.35)[\colorg, \colorg];

        \node (h0) at (1.5, 0.5) [labell]{$w$};
        \node (h0) at (0, 0.9) [labell]{$\beta^{-1}(\alpha(w))$};

        \end{tikzpicture}
        \caption{Reconfiguration sequence of Lemma~\ref{prop:matchingab} to obtain $\gamma$. The color of $w$ in $\alpha$ appears only once in $\alpha$ but at least twice in $\beta$. We only represent some arcs of the color-shift digraph. } 
        \label{fig:matchingab}
\end{figure}

Hence, for every $w$ in $A$, there exists a unique vertex $u$ in $B$ such that $\alpha(w) = \beta(u)$. 
We say that $w$ and $u$ are \emph{cousins}.
Now, we show that every vertex of $A \cup B$ has a unique cousin by proving that $A$ and $B$ are disjoint (since only a vertex in $A \cap B$ can have two cousins).

\begin{lemma}\label{prop:emptyab}
    $A \cap B = \emptyset$
\end{lemma}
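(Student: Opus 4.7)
The plan is to assume, for contradiction, that some vertex $w$ lies in $A \cap B$ and then produce a three-step recoloring of $\alpha$ that lets us shrink $G$ by two adjacent vertices. The first thing I would check is that $\alpha(w) \neq \beta(w)$: otherwise the common color $c = \alpha(w) = \beta(w)$ would appear only on $w$ in both $\alpha$ and $\beta$, so $w$ would have no neighbor colored $c$ in $\alpha$, contradicting Lemma~\ref{lem:aeqb}. Then, applying Lemma~\ref{prop:matchingab}, I obtain a unique vertex $u \neq w$ with $\beta(u) = \alpha(w)$ and a unique vertex $v \neq w$ with $\alpha(v) = \beta(w)$. Lemma~\ref{lem:aeqb} forces $w$ to be in conflict over both $\alpha(w)$ and $\beta(w)$, and since $u$ and $v$ are the only vertices witnessing these colors in the opposite coloring, we must have $u, v \in N(w)$.

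Next I would construct a coloring $\gamma$ from $\alpha$ in three steps: (1) recolor $v$ with a color $c^* \in L(v) \setminus \alpha(N[v])$, which exists since $|L(v)| \geq d(v) + 2$; (2) recolor $w$ with $\beta(w)$, which is valid because $v$ was the unique vertex colored $\beta(w)$ in $\alpha$ and has just switched to $c^* \neq \beta(w)$ (note that $c^* \neq \beta(w)$ is automatic since $\alpha(v) = \beta(w) \in \alpha(N[v])$); (3) recolor $u$ with $\beta(u) = \alpha(w) \in L(u)$, which is valid because $w$ was the unique vertex colored $\alpha(w)$ in $\alpha$ and has just been recolored with $\beta(w) \neq \alpha(w)$. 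After these three moves, $\gamma$ agrees with $\beta$ on $\{w, u\}$. The same three steps also work in the degenerate case $u = v$: they then implement the swap of the two colors on $\{w, u\}$ via the intermediate color $c^*$.

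Finally, since $u$ is a neighbor of $w$, the two vertices lie in different parts of the complete multipartite graph $G$, so Lemma~\ref{cl:mudecrease} gives $\mu(G - \{w, u\}) \leq \mu(G) - 1$. Invoking the minimality of $G$ on $G - \{w, u\}$ with $L$ restricted accordingly bounds the distance from $\gamma$ to $\beta$ on $G - \{w, u\}$ by $n(G) - 2 + \mu(G) - 1$, and adding the three preparatory steps yields $\dist_G(\alpha, \beta) \leq 3 + (n(G) - 2) + (\mu(G) - 1) = n(G) + \mu(G)$, contradicting the hypothesis. The main point I need to verify carefully is the validity of steps (2) and (3): both rely on the fact that $\alpha(w)$ and $\beta(w)$ each appear exactly once in $\alpha$ (and exactly once in $\beta$), which is precisely the content of Lemma~\ref{prop:matchingab}. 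This is why establishing the matching structure before tackling $A \cap B = \emptyset$ is essential rather than an obstacle.
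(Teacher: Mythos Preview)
Your proof is correct and follows essentially the same approach as the paper: pick the vertex in $A\cap B$, use Lemma~\ref{prop:matchingab} to locate the two unique ``cousins'' carrying $\alpha(w)$ and $\beta(w)$ on the opposite side, then do the three-step cascade (recolor the $\alpha$-cousin freely, push $w$ to $\beta(w)$, push the $\beta$-cousin to $\beta(u)$) and finish by minimality on $G$ minus the two agreed vertices. Your write-up is in fact slightly more careful than the paper's: you explicitly rule out $\alpha(w)=\beta(w)$ via Lemma~\ref{lem:aeqb}, you spell out why $u,v\in N(w)$, and you invoke Lemma~\ref{cl:mudecrease} rather than leaving the matching drop implicit.
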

\begin{proof}
    Assume by contradiction that there exists $v \in A \cap B$. 
    By Lemma~\ref{prop:matchingab}, there exist $u$ in $B$ and $w$ in $A$ (with possibly $u=w$) such that $\alpha(v) = \beta(u)$ and $\beta(v) = \alpha(w)$. 
    Construct a new coloring from $\alpha$ as follows.
    Recolor $w$ with a color in $L(w) \setminus \alpha(N[w])$.
    Then, recolor $v$ with $\beta(v) = \alpha(w)$, and finally, recolor $u$ with $\beta(u) = \alpha(v)$.
    Restrict $L$ to $G - u - v$. 
    By minimality of $G$, we can recolor $G$ from $\alpha$ to $\beta$ in at most $3 + n(G - u - v) + \mu(G - u - v) \le 3 + (n(G) - 2) + (\mu(G) - 1) \le n(G) + \mu(G)$, a contradiction.
\end{proof}

By Lemmas~\ref{prop:matchingab} and~\ref{prop:emptyab}, each vertex $w$ in $A\cup B$ has a unique cousin, denoted by $\widetilde{w}$.
Note that this gives a first bound on the number of colors in $\alpha(G)$.
Indeed, every vertex in $A$ has a cousin $\widetilde{a}$ which is not in $A$, so $|A| \le \frac{n(G)}{2}$. 
Hence, there are at least $\frac{n(G)}{2}$ vertices whose color appears at least twice on $G$ in $\alpha$ and $|\alpha(G)| \le |A| + \frac{(n(G) - |A|)}{2} \le \frac 34 n(G)$.
However, this bound is too weak for our purpose (proving that every vertex of $I_3$ has a free color). To improve it, we investigate the vertices whose colors appear twice.

\begin{lemma}\label{prop:no2into1}
    Let $v_1, v_2$ in $B$. If $v_1$ and $v_2$ have the same color $a$ in $\alpha$, then $a$ appears at least three times for $\alpha$. 
    Symmetrically, let $u_1$ and $u_2$ in $A$. If $u_1$ and $u_2$ have the same color $b$ in $\beta$, then $b$ appears at least three times for $\beta$. 
\end{lemma}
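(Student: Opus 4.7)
The plan is to argue by contradiction: suppose $|\alpha^{-1}(a)| = 2$, so $\alpha^{-1}(a) = \{v_1, v_2\}$, forcing $v_1, v_2$ to lie in a common part $I_i$. Writing $b_1 = \beta(v_1)$ and $b_2 = \beta(v_2)$, we have $b_1 \neq b_2$ since $v_1, v_2 \in B$; by Lemma~\ref{prop:matchingab}, the unique $\alpha$-preimages $u_1, u_2$ of $b_1, b_2$ lie in $A$ and outside $I_i$. The contrapositive of Lemma~\ref{prop:matchingab} together with $\alpha(G) = \beta(G)$ yields $|\beta^{-1}(a)| \geq 2$; fix distinct $w_1, w_2 \in \beta^{-1}(a)$, which share a common part $I_{i'}$, and note $i' \neq i$ because Lemma~\ref{lem:aeqb} forces some element of $\beta^{-1}(a)$ to be adjacent to $v_1$, i.e., to lie outside $I_i$.

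The core of the proof is a six-move recoloring from $\alpha$ to a coloring $\gamma$ agreeing with $\beta$ on $S := \{v_1, v_2, w_1, w_2\}$: (1) recolor $u_1$ to some $c_1 \in L(u_1) \setminus (\alpha(N(u_1)) \cup \{b_1, b_2\})$; (2) recolor $v_1$ to $b_1$; (3) recolor $u_2$ to some $c_2 \in L(u_2) \setminus (\alpha'(N(u_2)) \cup \{b_2\})$; (4) recolor $v_2$ to $b_2$; (5) recolor $w_1$ to $a$; (6) recolor $w_2$ to $a$. Steps (2) and (4) are legal because $u_1, u_2$ were the only $\alpha$-vertices bearing $b_1, b_2$ and have been moved in steps (1) and (3); steps (5) and (6) are legal because $v_1, v_2$ were the only $\alpha$-vertices bearing $a$ and have been moved by step (4), and $w_1, w_2$ are non-adjacent. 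The crucial point is the feasibility of step (1): the rescuing observation is that $v_1, v_2$ are both neighbors of $u_1$ sharing the color $a$, so $|\alpha(N(u_1))| \leq d(u_1) - 1$; combined with $|L(u_1)| \geq d(u_1) + 2$, at least one valid $c_1$ survives after excluding $b_1$ and $b_2$. Step (3) only needs to exclude the single extra color $b_2$, which the standard list count handles directly.

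To close the argument, restrict $L$ to $G' := G - S$ and apply Lemma~\ref{cl:mudecrease} with $\{v_1, v_2\} \subseteq I_i$ and $\{w_1, w_2\} \subseteq I_{i'}$ (using $i \neq i'$) to obtain $\mu(G') \leq \mu(G) - 2$. By minimality of $G$, $\dist_{G'}(\gamma|_{G'}, \beta|_{G'}) \leq n(G') + \mu(G') \leq n(G) + \mu(G) - 6$, hence
\[ \dist_G(\alpha, \beta) \leq 6 + \dist_{G'}(\gamma|_{G'}, \beta|_{G'}) \leq n(G) + \mu(G), \]
contradicting the choice of $\alpha$ and $\beta$. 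The symmetric statement follows by swapping the roles of $\alpha$ and $\beta$.

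The main obstacle I anticipate is sizing the agreement set $S$ and simultaneously justifying the two-color exclusion in step (1). Smaller choices such as $\{v_1, v_2\}$ or $\{v_1, v_2, w_1\}$ do not yield a large enough matching decrease through Lemma~\ref{cl:mudecrease} to absorb six recoloring moves; taking a \emph{second} witness $w_2 \in \beta^{-1}(a)$ in the same part $I_{i'} \neq I_i$ is exactly what delivers a matching drop of $2$ and balances the budget $|S| + (\mu(G) - \mu(G'))$ against the six steps. The delicate part of step (1) is to recognise the hidden list slack at $u_1$ coming from $v_1$ and $v_2$ sharing the color $a$ in $\alpha$.
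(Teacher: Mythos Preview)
Your proof is correct and follows essentially the same approach as the paper. The paper recolors the two cousins $\widetilde{v}_1,\widetilde{v}_2$ (your $u_1,u_2$), then $v_1,v_2$, then \emph{all} of $\beta^{-1}(a)$, and removes $\{v_1,v_2\}\cup\beta^{-1}(a)$; you instead fix only two witnesses $w_1,w_2\in\beta^{-1}(a)$ and remove $\{v_1,v_2,w_1,w_2\}$, but the key observations (the slack at $u_1$ from the repeated color $a$, the bound $|\beta^{-1}(a)|\ge 2$ via Lemma~\ref{prop:matchingab}, and the matching drop of $2$ via Lemma~\ref{cl:mudecrease}) are identical, and the arithmetic balances the same way.
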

\begin{proof}
    Assume by contradiction that $v_1$ and $v_2$ are both colored with $a$ in $\alpha$ and that $a$ appears exactly twice for $\alpha$. 
    Note that $v_1$ and $v_2$ are in the same set $I_j$ since they have the same color in $\alpha$, so their respective cousins $\widetilde{v}_1$ and $\widetilde{v}_2$ are both adjacent to $v_1$ and $v_2$. 
    By Lemma~\ref{prop:matchingab}, if $a$ appears only once for $\beta$, it also appears only once for $\alpha$, which contradicts that both $v_1$ and $v_2$ are colored with $a$ in $\alpha$.
    Thus $a$ appears at least twice for $\beta$.
    
    Let $\gamma$ be the coloring obtained from $\alpha$ as follows (see Figure~\ref{fig:no2into1s} for an illustration).
    Recolor $\widetilde{v}_1$ with a color in $L(\widetilde{v}_1) \setminus (\alpha(N[\widetilde{v}_1]) \cup \{\beta(v_2) \})$, and $\widetilde{v}_2$ with a color in $L(\widetilde{v}_2) \setminus (\alpha(N[\widetilde{v}_2]) \cup \{\beta(v_1)\})$, which is possible since $v_1,v_2$ share the same color under $\alpha$ and are both adjacent to $\widetilde{v}_1$ and $\widetilde{v}_2$. 
    Then, recolor $v_1$ with $\beta(v_1)$, and $v_2$ with $\beta(v_2)$.
    Finally, recolor the vertices colored $a$ in $\beta$ with $a$.    
    Let $G'$ the graph obtained from $G$ by removing $v_1, v_2$ and $\beta^{-1}(a)$ and $L'$ the list-assignment obtained by restricting $L$ to $G'$. 
    By minimality of $G$ and Lemma~\ref{cl:mudecrease}, we have the following contradiction:
    \[
    \begin{aligned}
    \dist_G(\alpha, \beta) &\le 4 + |\beta^{-1}(a)| + n(G') + \mu(G') \\
     &\le 4 + |\beta^{-1}(a)| + (n(G) - 2 - |\beta^{-1}(a)|) + (\mu(G) - \min(|\beta^{-1}(a)|, |\{ v_1, v_2 \}|))  \\
     &\le n(G) + \mu(G).
    \end{aligned}
    \]
    By symmetry, we also get the result for $\beta$.
\end{proof}

\begin{figure}[ht]
        \centering
        \tikzstyle{v1}=[circle, minimum size=3pt, scale=0.6, fill]
        \tikzstyle{v2}=[circle,draw, minimum size=12pt, line width=2]
        \tikzstyle{fleche}=[arrows = {-Latex[width=4pt, length=4pt]}]
        \tikzstyle{labell}=[text opacity=1, scale =1]
        \tikzstyle{garrow}=[single arrow, draw, 
      minimum width =1mm, single arrow head extend=3pt, inner sep=0.5mm,
      minimum height=6mm, fill=\colora, \colora]
        \begin{tikzpicture}[scale=0.9]

        \nodc(a1)(1.5,0)[\colord, none];
        \nodc(a2)(0,0)[\colorg, \colord];
        \nodc(a3)(1.5,0.7)[\colorf, none];
        \nodc(a4)(0,0.7)[\colorg, \colorf];
        \nodc(a5)(-1.5,0)[none, \colorg];
        \nodc(a6)(-1.5,0.7)[none, \colorg];

        \draw[fleche] (a2) to (a1);
        \draw[fleche] (a4) to (a3);
        \draw[fleche] (a5) to (a2);
        \draw[fleche] (a6) to (a2);
        \draw[fleche] (a5) to (a4);
        \draw[fleche] (a6) to (a4);

        \node (h0) at (1.5,1.2) [labell]{$\widetilde{v}_1$};
        \node (h0) at (1.5, -0.5) [labell]{$\widetilde{v}_2$};
        \node (h0) at (0,1.2) [labell]{$v_1$};
        \node (h0) at (0, -0.5) [labell]{$v_2$};
        \node (h0) at (-1.6,1.25)[labell]{$\beta^{-1}(a)$};

        \draw[dashed, \colora] (2.3, -0.65) to  (2.3, 1.35);
        \node (f1) at (2.35,0.35) [garrow]{};

        \tikzset{xshift=4.7cm}

        \nodc(a1)(1.5,0)[\colorc, none];
        \nodc(a2)(0,0)[\colorg, \colord];
        \nodc(a3)(1.5,0.7)[\colorb, none];
        \nodc(a4)(0,0.7)[\colorg, \colorf];
        \nodc(a5)(-1.5,0)[none, \colorg];
        \nodc(a6)(-1.5,0.7)[none, \colorg];

        \draw[fleche] (a5) to (a2);
        \draw[fleche] (a6) to (a2);
        \draw[fleche] (a5) to (a4);
        \draw[fleche] (a6) to (a4);

        \draw[dashed, \colora] (2.3, -0.65) to  (2.3, 1.35);
        \node (f1) at (2.35,0.35) [garrow]{};

        \tikzset{xshift=4.7cm}

        \nodc(a1)(1.5,0)[\colorc, none];
        \nodc(a2)(0,0)[\colord, \colord];
        \nodc(a3)(1.5,0.7)[\colorb, none];
        \nodc(a4)(0,0.7)[\colorf, \colorf];
        \nodc(a5)(-1.5,0)[none, \colorg];
        \nodc(a6)(-1.5,0.7)[none, \colorg];

        \draw[dashed, \colora] (2.3, -0.65) to  (2.3, 1.35);
        \node (f1) at (2.35,0.35) [garrow]{};
        
        \tikzset{xshift=4.7cm}

        \nodc(a1)(1.5,0)[\colorc, none];
        \nodc(a2)(0,0)[\colord, \colord];
        \nodc(a3)(1.5,0.7)[\colorb, none];
        \nodc(a4)(0,0.7)[\colorf, \colorf];
        \nodc(a5)(-1.5,0)[\colorg, \colorg];
        \nodc(a6)(-1.5,0.7)[\colorg, \colorg];

        \end{tikzpicture}
        \caption{Reconfiguration sequence of Lemma~\ref{prop:no2into1} to obtain $\gamma$.
        The color violet appears at least twice in $\beta$, and exactly twice in $\alpha$: on $v_1$ and $v_2$.  We only represent some arcs of the color-shift digraph.}
        \label{fig:no2into1s}
\end{figure}
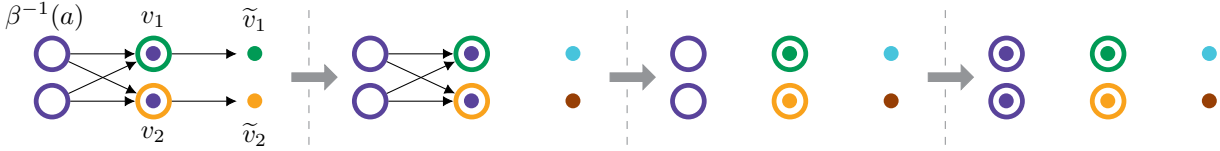 

To refine the bound on $|\alpha(G)|$, we prove that either more colors appear three times or more vertices have colors appearing twice for $\alpha$.

\begin{lemma}\label{prop:alpha23n}
The total number of colors in $\alpha$ and $\beta$ in $G$ is at most $\frac 23 n$.
\end{lemma}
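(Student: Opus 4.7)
The plan is to partition the colors used by $\alpha$ according to multiplicity and count. Since $\alpha(G) = \beta(G)$ (a consequence of Lemma~\ref{lem:aeqb}), it suffices to show $|\alpha(G)| \le \frac{2}{3}n$. Let $C_i^\alpha := \{c \in \alpha(G) : |\alpha^{-1}(c)| = i\}$ for $i \in \{1,2\}$ and $C_{\ge 3}^\alpha := \{c \in \alpha(G) : |\alpha^{-1}(c)| \ge 3\}$, so that $|C_1^\alpha| = |A|$. Counting vertices by colors gives
\[ n = |A| + 2|C_2^\alpha| + \sum_{c \in C_{\ge 3}^\alpha} |\alpha^{-1}(c)| \qquad \text{and} \qquad |\alpha(G)| = |A| + |C_2^\alpha| + |C_{\ge 3}^\alpha|. \]

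The heart of the argument is to establish the inequality $2|A| + |C_2^\alpha| \le n$. For this I use the cousin map $w \mapsto \widetilde{w}$, which by Lemmas~\ref{prop:matchingab} and~\ref{prop:emptyab} is a bijection $A \to B$, and define $\phi : A \to C_{\ge 2}^\alpha$ by $\phi(w) := \alpha(\widetilde{w})$. The image indeed lies in $C_{\ge 2}^\alpha$ because $A \cap B = \emptyset$ forces $\widetilde{w} \notin A$, so $\alpha(\widetilde{w})$ is not a unique color.

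The crucial observation, and the place where Lemma~\ref{prop:no2into1} is invoked, is that the fiber of $\phi$ over any $c \in C_2^\alpha$ has size at most one: if distinct $w_1, w_2 \in A$ both satisfied $\phi(w_i) = c$, then $\widetilde{w}_1, \widetilde{w}_2$ would be two vertices of $B$ sharing the $\alpha$-color $c$, contradicting the conclusion of Lemma~\ref{prop:no2into1}. Over $c \in C_{\ge 3}^\alpha$, injectivity of the cousin map only gives the trivial bound $|\phi^{-1}(c)| \le |\alpha^{-1}(c)|$. Summing over $c \in C_{\ge 2}^\alpha$ yields
\[ |A| \;\le\; |C_2^\alpha| + \sum_{c \in C_{\ge 3}^\alpha} |\alpha^{-1}(c)| \;=\; n - |A| - |C_2^\alpha|, \]
which rearranges to $2|A| + |C_2^\alpha| \le n$.

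To finish, I would combine this with $|C_{\ge 3}^\alpha| \le \frac{1}{3}(n - |A| - 2|C_2^\alpha|)$, which follows because each color in $C_{\ge 3}^\alpha$ contributes at least three vertices, and obtain
\[ |\alpha(G)| \;\le\; |A| + |C_2^\alpha| + \frac{1}{3}\bigl(n - |A| - 2|C_2^\alpha|\bigr) \;=\; \frac{1}{3}\bigl( n + 2|A| + |C_2^\alpha| \bigr) \;\le\; \frac{2}{3}n. \]
The main obstacle is spotting the correct counting setup: the map $\phi$ is precisely the one for which Lemma~\ref{prop:no2into1} bounds the fibers over $C_2^\alpha$, and once that is noticed the remaining estimates are routine arithmetic.
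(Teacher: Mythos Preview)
Your proof is correct and follows essentially the same approach as the paper: both arguments hinge on the cousin bijection $A\to B$ together with Lemma~\ref{prop:no2into1} to control how often elements of $B$ can share an $\alpha$-color of multiplicity two. The paper organises the count by partitioning $A$ into $A_1,A_2$ (according to whether $\alpha(\widetilde{w})$ has multiplicity $\ge 3$ or exactly $2$) and tracking the associated vertex sets $T_1,T_2$, whereas you partition the colors by multiplicity and phrase the same restriction as a fiber bound on $\phi$; the resulting inequalities are equivalent, and your packaging via $2|A|+|C_2^\alpha|\le n$ is arguably a little cleaner.
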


\begin{proof}
For every $w \in A$, $\widetilde{w}$ is in $B$, so $\alpha(\widetilde{w})$ does not appear once for $\alpha$ by Lemma~\ref{prop:emptyab}.
Thus, we can separate the set $A$ into two sets $A_1 := \{w\in A\mid \alpha(\widetilde{w}) \text{ appears at least three times for } \alpha\}$ and $A_2 :=\{w\in A\mid \alpha(\widetilde{w}) \text{ appears twice for } \alpha\}$.

Let $T_i=\bigcup_{w\in A_i} \alpha^{-1}(\alpha(\widetilde{w}))$ for $i = 1,2$ be the set of vertices with the same color in $\alpha$ as the cousins of the vertices in $A_i$, see Figure~\ref{fig:alpha23n}.
By construction, the colors in $\alpha$ of the cousins of the vertices in $A_1$ and $A_2$ are distinct, so $T_1$ and $T_2$ are disjoint.
And since, for every $v \in T_1 \cup T_2$, at least two vertices are colored $\alpha(v)$, we have that $A$ and  $T_1 \cup T_2$ are disjoint. 
Hence, $A$, $T_1$ and $T_2$ are pairwise disjoint.

\begin{figure}[ht]
        \centering
        \tikzstyle{v1}=[circle, minimum size=3pt, scale=0.6, fill]
        \tikzstyle{v2}=[circle,draw, minimum size=12pt, line width=2]
        \tikzstyle{fleche}=[arrows = {-Latex[width=4pt, length=4pt]}]
        \tikzstyle{labell}=[text opacity=1, scale =1]
        \begin{subfigure}[t]{0.4\textwidth}
            \centering
            \begin{tikzpicture}[scale=1]
                \nodc(a1)(1.5,0)[\colorg, none];
                \nodc(a2)(0,0)[\colord, \colorg];
                \nodc(a3)(0,0.7)[\colord, none];
                \nodc(a4)(0,-0.7)[\colord, none];
                
                \node (h0) at (1.7, 0.5) [labell]{$w \in A_1$};
                \node (h0) at (-1, 0) [labell]{$T_1 \ni \widetilde{w}$};
                \node (h0) at (-0.7, 0.7) [labell]{$T_1 \ni$};
                \node (h0) at (-0.7, -0.7) [labell]{$T_1 \ni$};

                \draw[fleche] (a2) to (a1);
            \end{tikzpicture}
            
        \end{subfigure}
        \begin{subfigure}[t]{0.4\textwidth}
            \centering
            \begin{tikzpicture}[scale=1]
                
                \nodc(a1)(1.5,0)[\colorg, none];
                \nodc(a2)(0,0)[\colorf, \colorg];
                \nodc(a4)(0,0.7)[\colorf, none];
                
                \nodc(a5)(0,-0.7)[none, none];
        
                \draw[fleche] (a2) to (a1);

                \node (h0) at (1.5, 0.5) [labell]{$w \in A_2$};
                \node (h0) at (-1, 0) [labell]{$T_2 \ni \widetilde{w}$};
                \node (h0) at (-0.7, 0.7) [labell]{$T_2 \ni$};
            \end{tikzpicture}
        \end{subfigure}
        \caption{Let $w$ in $A$ colored violet in $\alpha$. 
        Its cousin $\widetilde{w}$ is the only vertex colored violet in $\beta$.
        On the left, the color of $\widetilde{w}$ in $\alpha$, orange here, appears at least three times in $\alpha$. 
        Hence, $w$ is in $A_1$ and the vertices colored orange in $\alpha$ are in $T_1$.
        On the right, the color of $\widetilde{w}$ in $\alpha$, green here, appears exactly twice. 
        Hence, $w$ is in $A_2$ and the two vertices colored green in $\alpha$ are in $T_2$. } 
        \label{fig:alpha23n}
\end{figure}
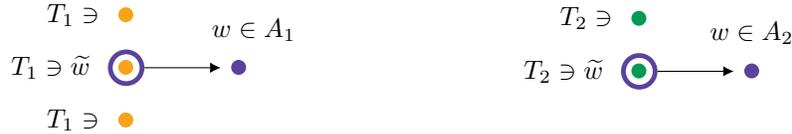 
Since every $\widetilde{w}$ is in $T_1$ for every $w \in A_1$, we have that $|T_1| \geq |A_1|$, and by construction of $A_1$, $\alpha^{-1}(\alpha(\widetilde{w}))$ contains at least three vertices, so $|\alpha(T_1)| \leq |T_1|/3$.
By Lemma~\ref{prop:no2into1}, for $w_1$ and $w_2$ in $A_2$, $\alpha^{-1}(\alpha(\widetilde{w}_1)) \neq \{ \widetilde{w}_1, \widetilde{w}_2 \}$.
Hence, the sets $\alpha^{-1}(\alpha(\widetilde{w}))$ for $w \in A_2$ (which have size $2$ by construction of $A_2$) are pairwise disjoint. In particular, $|\alpha(T_2)| = |A_2|$ and $|T_2| = 2|A_2|$ by construction of $T_2$.

Let $R := V(G) \setminus (A \sqcup T_1 \sqcup T_2)$. Since $A \cap R = \emptyset$, every color in $\alpha(R)$ appears at least twice in $R$, hence $|\alpha(R)| \leq |R|/2 $.
Now, we can bound $|\alpha(G)|$ (and $|\beta(G)|$ by symmetry):
    \[
    \pushQED{\qed} 
    \begin{aligned}
    |\alpha(G)|   &\leq |\alpha(A)| + |\alpha(T_1)| + |\alpha(T_2)| + |\alpha(R)| \\
                        &\leq |A| + \frac{|T_1|}{3} + |A_2| +  |R|/2 \\
                        &= |A| + \frac{|T_1|}{3} + |A_2| +  \frac{n - |A| - |T_1| - 2|A_2|}{2} \\
                        &= \frac{n}{2} + \frac{|A|}{2} - \frac{|T_1|}{6} \leq  \frac{n}{2} + \frac{3|A_2|+2|A_1|}{6} \\
                        &\leq  \frac{n}{2} + \frac{|A_2|+|T_2|+|A_1|+|T_1|}{6} \leq \frac{n}{2} + \frac{n}{6} = \frac 23 n.
    \end{aligned}
    \]
    
\end{proof}

We have bounded the number of colors in $\alpha(G)$ and $\beta(G)$.
Hence, we now use that there are few colors in the colorings to find free colors and recolor the graph with few steps. We proceed by recoloring $I_3$ and its neighborhoods in the color-shift digraph. To this end, we start by finding a free color for each vertex of $I_3$.

\begin{lemma}\label{prop:freeI3}
For each $v\in I_3$, $|L(v)| > |\alpha(G)| = |\beta(G)|$.
\end{lemma}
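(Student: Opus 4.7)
The plan is a direct computation combining three facts: the hypothesis $|L(v)| \ge d(v)+2$, the bound $|\alpha(G)| = |\beta(G)| \le \tfrac{2}{3}n(G)$ from Lemma~\ref{prop:alpha23n}, and the fact that $I_3$ cannot be too large.

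First I would compute $d(v)$ explicitly. Since $G$ is complete multipartite and $v \in I_3$, every vertex outside $I_3$ is a neighbor of $v$, so $d(v) = n(G) - |I_3|$. Hence the list-size hypothesis gives $|L(v)| \ge n(G) - |I_3| + 2$.

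Next, since the parts $I_1, \ldots, I_m$ are ordered by decreasing size and we are in the ``almost perfect matching'' case where $m \ge 3$, I have $|I_3| \le |I_2| \le |I_1|$, so $3|I_3| \le |I_1|+|I_2|+|I_3| \le n(G)$, i.e.\ $|I_3| \le n(G)/3$. Combined with the previous bound this yields
\[
|L(v)| \;\ge\; n(G) - |I_3| + 2 \;\ge\; \tfrac{2}{3}n(G) + 2 \;>\; \tfrac{2}{3}n(G) \;\ge\; |\alpha(G)|,
\]
where the last inequality is Lemma~\ref{prop:alpha23n}. Recalling that $|\alpha(G)| = |\beta(G)|$ by Lemma~\ref{lem:aeqb}, this gives exactly the claimed $|L(v)| > |\alpha(G)| = |\beta(G)|$.

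There is no real obstacle here: the lemma is essentially a bookkeeping consequence of Lemma~\ref{prop:alpha23n} and the fact that we are in the regime where all parts are reasonably balanced. The only point worth double-checking is that we are indeed allowed to invoke $|I_3| \le n(G)/3$, which uses only the ordering of the parts and $m \ge 3$; both were established at the start of Section~\ref{sec:rpart} (almost perfect matching subsection).
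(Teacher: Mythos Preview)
Your proof is correct and follows the same approach as the paper: bound $|I_3|\le n(G)/3$ from the ordering of the parts, compute $d(v)=n(G)-|I_3|$, and combine with Lemma~\ref{prop:alpha23n} to get $|L(v)|\ge \tfrac{2}{3}n(G)+2>|\alpha(G)|$. Your write-up is slightly more detailed but the argument is identical.
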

\begin{proof}
    First, observe that $|I_3| \le \frac{n}{3}$ (otherwise, each of $I_1, I_2$ and $I_3$ is larger than $\frac{n}{3}$). 
    By Lemma~\ref{prop:alpha23n}, $|L(v)| \ge d(v) + 2 = n - |I_3| + 2 \ge \frac{2}{3}n + 2 > |\alpha(G)|$.
\end{proof}

This implies that we can recolor the colorings to agree on $I_3$ using two steps for each vertex. We show it implies two properties on $I_3$ that will conclude the proof of Theorem~\ref{thm:rpartite}.
First, we show that given a color $a\in\alpha(I_3)$, there are more vertices colored with $a$ in $\alpha$ than in $\beta$. 
Then, we show that every color in $\alpha(I_3) \cup \beta(I_3)$ appears on at least three vertices of $I_3$.

\begin{lemma} \label{prop:colorsetI3}
    Let $a \in \alpha(I_3)$ and $b \in \beta(I_3)$. Then, $|\alpha^{-1}(a)| > |\beta^{-1}(a)|$ and $|\beta^{-1}(b)| > |\alpha^{-1}(b)|$.
\end{lemma}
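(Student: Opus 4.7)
My plan is to argue by contradiction: suppose $|\alpha^{-1}(a)| \le |\beta^{-1}(a)|$ for some $a \in \alpha(I_3)$, and build a transformation from $\alpha$ to $\beta$ using at most $n(G) + \mu(G)$ recolorings, contradicting the choice of $G$. Write $A_a := \alpha^{-1}(a)$ and $B_a := \beta^{-1}(a)$. Since $\alpha$ and $\beta$ are proper, each set is contained in a single part of $G$. Because $a \in \alpha(I_3)$, we have $A_a \subseteq I_3$; and by Lemma~\ref{lem:aeqb}, $B_a$ is nonempty, contained in some part $I_k$. The argument then splits on whether $k = 3$ or $k \ne 3$.

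If $k = 3$, then $A_a$ and $B_a$ both lie in $I_3$, and in particular $B_a \setminus A_a$ is non-adjacent to $A_a$ in $G$. I would construct $\gamma$ from $\alpha$ by recoloring each $u \in B_a \setminus A_a$ directly to $a$: this is legal because the $\alpha$-vertices colored $a$ all belong to $A_a \subseteq I_3$, which shares no edges with $u$. Then $\gamma$ and $\beta$ agree on $B_a$, and applying the minimality of $G$ on $G - B_a$ with the restricted list-assignment yields
\[
\dist_G(\alpha,\beta) \;\le\; |B_a \setminus A_a| + n(G - B_a) + \mu(G - B_a) \;\le\; n(G) - |A_a \cap B_a| + \mu(G) \;\le\; n(G) + \mu(G),
\]
contradicting the choice of $\alpha, \beta$.

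If instead $k \ne 3$, the sets $A_a$ and $B_a$ lie in distinct parts and are fully adjacent, so one cannot legally recolor $B_a$ to $a$ without first clearing color $a$ from $A_a$. The key ingredient is Lemma~\ref{prop:freeI3}: each $w \in A_a \subseteq I_3$ possesses a free color $c_w$. I would build $\gamma$ from $\alpha$ by first recoloring each $w \in A_a$ to $c_w$ and then each $u \in B_a$ to $a$, and build $\zeta$ from $\beta$ by recoloring each $w \in A_a$ to $c_w$. The colorings $\gamma$ and $\zeta$ then agree on $A_a \cup B_a$, and since $A_a, B_a$ lie in different parts, Lemma~\ref{cl:mudecrease} gives $\mu(G - A_a - B_a) \le \mu(G) - |A_a|$ (using $|A_a| \le |B_a|$). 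Adding the $|A_a| + |B_a|$ steps on the $\alpha$-side, the $|A_a|$ steps on the $\beta$-side, and the bound from minimality on $G - (A_a \cup B_a)$ gives
\[
\dist_G(\alpha,\beta) \;\le\; (|A_a|+|B_a|) + |A_a| + (n(G) - |A_a| - |B_a|) + (\mu(G) - |A_a|) \;=\; n(G) + \mu(G),
\]
again a contradiction.

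The delicate step is case $k \ne 3$: the extra $|A_a|$ recolorings on the $\beta$-side are affordable only because Lemma~\ref{cl:mudecrease} saves $|A_a|$ on the matching number, and that saving crucially requires $A_a$ and $B_a$ to sit in two different parts. The symmetric inequality $|\beta^{-1}(b)| > |\alpha^{-1}(b)|$ for $b \in \beta(I_3)$ follows by swapping the roles of $\alpha$ and $\beta$ throughout, which preserves every hypothesis (in particular $\alpha(G) = \beta(G)$ by Lemma~\ref{lem:aeqb}, and the structural lemmas of this subsection are themselves stated symmetrically in $\alpha$ and $\beta$).
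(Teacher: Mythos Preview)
Your proposal is correct and, in the case $k \ne 3$, matches the paper's argument essentially verbatim (free-color the vertices of $A_a\subseteq I_3$, push color $a$ onto $B_a$, free-color $A_a$ on the $\beta$-side, then invoke minimality together with Lemma~\ref{cl:mudecrease}). Your case $k = 3$ is in fact vacuous: any $v \in A_a \subseteq I_3$ has a conflict over $a = \alpha(v)$ by Lemma~\ref{lem:aeqb}, so some neighbor of $v$---necessarily outside $I_3$---lies in $B_a$, forcing $B_a \not\subseteq I_3$; the paper applies Lemma~\ref{cl:mudecrease} directly, implicitly relying on this observation, whereas your case split makes the hypothesis explicit.
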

\begin{proof}
    Assume by contradiction that $|\alpha^{-1}(a)| \leq |\beta^{-1}(a)|$ for a color $a \in \alpha(I_3)$.
    Note that $I_3$ contains all the vertices colored $a$ in $\alpha$.
    By Lemma~\ref{prop:freeI3}, let $\gamma$ be the coloring obtained from $\alpha$ by recoloring each vertex $v$ colored $a$ in $\alpha$ with a free color, then recoloring each vertex colored $a$ in $\beta$ with $a$ in $\alpha$.
    Finally, let $\zeta$ be the coloring obtained from $\beta$ by recoloring each vertex $v$ colored $a$ in $\alpha$ with $\gamma(v)$ (since these colors are free).
    
    Let $G'$ the graph obtained from $G$ by removing $\alpha^{-1}(a)$ and $\beta^{-1}(a)$, and $L'$ the list-assignment obtained by restricting $L$ to $G'$. 
    By minimality of $G$ and Lemma~\ref{cl:mudecrease}, we have the following contradiction: 
    \begin{align*}
        \dist_G(\alpha, \beta) &\le 2|\alpha^{-1}(a)| + |\beta^{-1}(a)| + d_{G'}(\gamma,\zeta) \\
                            &\le  2|\alpha^{-1}(a)| + |\beta^{-1}(a)| + n(G') + \mu(G') \\
                            &\le  2|\alpha^{-1}(a)| + |\beta^{-1}(a)| + (n(G) - |\alpha^{-1}(a)| - |\beta^{-1}(a)|) + (\mu(G) - \min(|\alpha^{-1}(a)|,|\beta^{-1}(a)|)) \\
                            &= n(G) + \mu(G) + |\alpha^{-1}(a)| - \min(|\alpha^{-1}(a)|,|\beta^{-1}(a)|)) \\
                            &\le n(G) + \mu(G).
    \end{align*}
    By symmetry of $\alpha$ and $\beta$, the conclusion follows.
\end{proof}

By Lemma~\ref{prop:colorsetI3} and Lemma~\ref{prop:matchingab}, we deduce that every color of $\alpha(I_3) \cup \beta(I_3)$ appears at least three times and hence we have that:
\begin{center}
    $\alpha(I_3)$ and $\beta(I_3)$ have both size at most $\frac{|I_3|}{3}$. 
\end{center}

Let us now conclude the proof of Theorem~\ref{thm:rpartite}. 
Let  $U:= \alpha^{-1}(\beta(I_3))$ be the in-neighborhood and $W := \beta^{-1}(\alpha(I_3))$ the out-neighborhood of $I_3$ in the color-shift digraph.
By symmetry, say $ |U| \le |W|$.
Let $\sigma$ be the coloring obtained from $\alpha$ as follows.
First, successively recolor each vertex $u$ of $U$ with an available color not in $\beta(I_3)$.
Note that this is always possible since at each step $I_3$ stays colored as in $\alpha$, hence at most $|N(u)\setminus I_3|+|\alpha(I_3)|+|\beta(I_3)|\leq \deg(u)-|I_3|/3$ colors are forbidden for $u$. 
Then, recolor the vertices $v$ of $I_3$ with $\beta(v)$, and finally the vertices $w$ of $W$ with $\beta(w)$ (see Figure~\ref{fig:endmulti}).

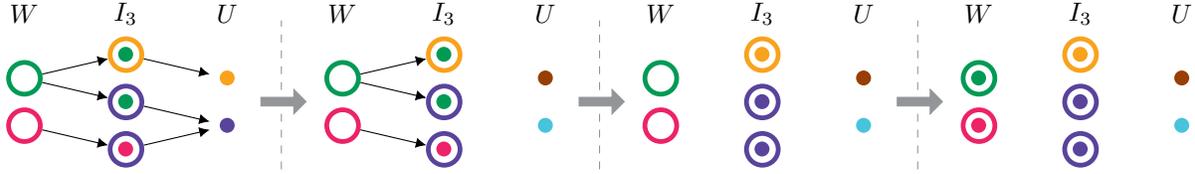
\begin{figure}[hbtp]
        \begin{center}
        \tikzstyle{v1}=[circle, minimum size=3pt, scale=0.6, fill]
        \tikzstyle{v2}=[circle,draw, minimum size=12pt, line width=2]
        \tikzstyle{fleche}=[arrows = {-Latex[width=4pt, length=4pt]}]
        \tikzstyle{labell}=[text opacity=1, scale =1]
        \tikzstyle{garrow}=[single arrow, draw, 
      minimum width =1mm, single arrow head extend=3pt, inner sep=0.5mm,
      minimum height=6mm, fill=\colora, \colora]
        \begin{tikzpicture}[scale=0.9]

        \nodc(a1)(0,-0.7)[\colore, \colorg];
        \nodc(a2)(0,0)[\colorf, \colorg];
        \nodc(a3)(0,0.7)[\colorf, \colord];
        \nodc(a4)(-1.5,0.35)[none, \colorf];
        \nodc(a5)(-1.5,-0.35)[none, \colore];
        \nodc(a6)(1.5,0.35)[\colord, none];
        \nodc(a7)(1.5,-0.35)[\colorg, none];

        \draw[fleche] (a5) to (a1);
        \draw[fleche] (a4) to (a2);
        \draw[fleche] (a4) to (a3);
        \draw[fleche] (a1) to (a7);
        \draw[fleche] (a2) to (a7);
        \draw[fleche] (a3) to (a6);

        \node (h0) at (-1.5,1.3) [labell]{$W$};
        \node (h0) at (1.5, 1.3) [labell]{$U$};
        \node (h0) at (0,1.3) [labell]{$I_3$};
        
        \draw[dashed, \colora] (2.3, -1) to  (2.3, 1.2);
        \node (f1) at (2.3,-0) [garrow, fill=\colora, \colora]{};
        \tikzset{xshift=4.7cm}

        \nodc(a1)(0,-0.7)[\colore, \colorg];
        \nodc(a2)(0,0)[\colorf, \colorg];
        \nodc(a3)(0,0.7)[\colorf, \colord];
        \nodc(a4)(-1.5,0.35)[none, \colorf];
        \nodc(a5)(-1.5,-0.35)[none, \colore];
        \nodc(a6)(1.5,0.35)[\colorc, none];
        \nodc(a7)(1.5,-0.35)[\colorb, none];

        \draw[fleche] (a4) to (a3);
        \draw[fleche] (a4) to (a2);
        \draw[fleche] (a5) to (a1);

        \node (h0) at (-1.5,1.3) [labell]{$W$};
        \node (h0) at (1.5, 1.3) [labell]{$U$};
        \node (h0) at (0,1.3) [labell]{$I_3$};

        \draw[dashed, \colora] (2.3, -1) to  (2.3, 1.2);
        \node (f1) at (2.3,-0) [garrow, fill=\colora, \colora]{};
        \tikzset{xshift=4.7cm}

        \nodc(a1)(0,-0.7)[\colorg, \colorg];
        \nodc(a2)(0,0)[\colorg, \colorg];
        \nodc(a3)(0,0.7)[\colord, \colord];
        \nodc(a4)(-1.5,0.35)[none, \colorf];
        \nodc(a5)(-1.5,-0.35)[none, \colore];
        \nodc(a6)(1.5,0.35)[\colorc, none];
        \nodc(a7)(1.5,-0.35)[\colorb, none];

        \node (h0) at (-1.5,1.3) [labell]{$W$};
        \node (h0) at (1.5, 1.3) [labell]{$U$};
        \node (h0) at (0,1.3) [labell]{$I_3$};

        \draw[dashed, \colora] (2.3, -1) to  (2.3, 1.2);
        \node (f1) at (2.3,-0) [garrow, fill=\colora, \colora]{};
        \tikzset{xshift=4.7cm}

        \nodc(a1)(0,-0.7)[\colorg, \colorg];
        \nodc(a2)(0,0)[\colorg, \colorg];
        \nodc(a3)(0,0.7)[\colord, \colord];
        \nodc(a4)(-1.5,0.35)[\colorf, \colorf];
        \nodc(a5)(-1.5,-0.35)[\colore, \colore];
        \nodc(a6)(1.5,0.35)[\colorc, none];
        \nodc(a7)(1.5,-0.35)[\colorb, none];

        \node (h0) at (-1.5,1.3) [labell]{$W$};
        \node (h0) at (1.5, 1.3) [labell]{$U$};
        \node (h0) at (0,1.3) [labell]{$I_3$};

        \end{tikzpicture}
        \end{center}
        \caption{Recoloring sequence to obtain $\sigma$. We represent a subset of $I_3$ and its neighborhoods in the color-shift digraph.} 
        \label{fig:endmulti}
\end{figure} 
Let $G' := G - I_3 - W$ and $L'$ obtained by restricting $L$ to $G'$.
By minimality of $G$, we can recolor $G'$ from $\sigma$ to $\beta$ in at most $n(G') + \mu(G')$ steps.
By Lemma~\ref{prop:colorsetI3}, we have that $|W| < |I_3|$ so $\mu(G')=\mu(G)-\min(|I_3|,|W|)=\mu(G)-|W|$ by Lemma~\ref{cl:mudecrease}. Since $|U| \le |W| < |I_3|$, we can recolor $G$ from $\alpha$ to $\beta$ in at most $|U| + |I_3| + |W| + (n(G) - |I_3| - |W|) + (\mu(G) - \min(|I_3|, |W|)) \le n(G) + \mu(G)$ steps, a contradiction.

\paragraph*{Acknowledgments} 
    The author would like to thank Nicolas Bousquet and Théo Pierron for their valuable feedback on this paper, and the authors of \citet*{cambie2024optimally} for their helpful remarks on the early version of the arXiv preprint.

\bibliographystyle{plainnat}

\end{document}